\documentclass[12pt]{article}
\usepackage{graphicx}
\usepackage{amsmath}
\usepackage{amsfonts}
\usepackage{amsthm}
\usepackage{dsfont}
\usepackage{esint}
\usepackage[T1]{fontenc}
\usepackage{url}
\usepackage{color}
\usepackage[margin=1in]{geometry}
\usepackage{amssymb,bm}
\usepackage{amsmath}
\usepackage{amsthm}
\usepackage{graphicx}
\usepackage[active]{srcltx} 
\usepackage{hyperref}
\hypersetup{pdfborder=0 0 0}

\newtheorem{theorem}{{\sc Theorem}}[section]

\newtheorem{lemma}[theorem]{{\sc Lemma}}

\newtheorem{remark}[theorem]{Remark}

\newtheorem{definition}[theorem]{Definition}

\def\XXint#1#2#3{{\setbox0=\hbox{$#1{#2#3}{\int}$ }
\vcenter{\hbox{$#2#3$ }}\kern-.6\wd0}}

\newcommand{\Ga}{\alpha}

\newcommand{\Gd}{\delta}
\newcommand{\Ge}{\epsilon}

\newcommand{\Gg}{\gamma}

\newcommand{\Gl}{\lambda}

\newcommand{\Gth}{\theta}

\newcommand{\Gs}{\sigma}

\newcommand{\GO}{\Omega}

\bmdefine\BGa{\alpha}
\bmdefine\BGb{\beta}
\bmdefine\BGd{\delta}
\bmdefine\BGe{\epsilon}
\bmdefine\BGve{\varepsilon}
\bmdefine\BGf{\phi}
\bmdefine\BGvf{\varphi}
\bmdefine\BGg{\gamma}
\bmdefine\BGc{\chi}
\bmdefine\BGi{\iota}
\bmdefine\BGk{\kappa}
\bmdefine\BGl{\lambda}
\bmdefine\BGn{\eta}
\bmdefine\BGm{\mu}
\bmdefine\BGv{\nu}
\bmdefine\BGp{\pi}
\bmdefine\BGth{\theta}
\bmdefine\BGvth{\vartheta}
\bmdefine\BGr{\rho}
\bmdefine\BGvr{\varrho}
\bmdefine\BGs{\sigma}
\bmdefine\BGvs{\varsigma}
\bmdefine\BGt{\tau}
\bmdefine\BGj{\tau}
\bmdefine\BGu{\upsilon}
\bmdefine\BGo{\omega}
\bmdefine\BGx{\xi}
\bmdefine\BGy{\psi}
\bmdefine\BGz{\zeta}
\bmdefine\BGD{\Delta}
\bmdefine\BGF{\Phi}
\bmdefine\BGG{\Gamma}
\bmdefine\BGL{\Lambda}
\bmdefine\BGP{\Pi}
\bmdefine\BGT{\Theta}
\bmdefine\BGS{\Sigma}
\bmdefine\BGU{\Upsilon}
\bmdefine\BGO{\Omega}
\bmdefine\BGX{\Xi}
\bmdefine\BGY{\Psi}

\bmdefine\BCA{{\mathcal A}}
\bmdefine\BCB{{\mathcal B}}
\bmdefine\BCC{{\mathcal C}}
\bmdefine\BCD{{\mathcal D}}
\bmdefine\BCE{{\mathcal E}}
\bmdefine\BCF{{\mathcal F}}
\bmdefine\BCG{{\mathcal G}}
\bmdefine\BCH{{\mathcal H}}
\bmdefine\BCI{{\mathcal I}}
\bmdefine\BCJ{{\mathcal J}}
\bmdefine\BCK{{\mathcal K}}
\bmdefine\BCL{{\mathcal L}}
\bmdefine\BCM{{\mathcal M}}
\bmdefine\BCN{{\mathcal N}}
\bmdefine\BCO{{\mathcal O}}
\bmdefine\BCP{{\mathcal P}}
\bmdefine\BCQ{{\mathcal Q}}
\bmdefine\BCR{{\mathcal R}}
\bmdefine\BCS{{\mathcal S}}
\bmdefine\BCT{{\mathcal T}}
\bmdefine\BCU{{\mathcal U}}
\bmdefine\BCV{{\mathcal V}}
\bmdefine\BCW{{\mathcal W}}
\bmdefine\BCX{{\mathcal X}}
\bmdefine\BCY{{\mathcal Y}}
\bmdefine\BCZ{{\mathcal Z}}

\bmdefine\Bzr{ 0}
\bmdefine\Ba{ a}
\bmdefine\Bb{ b}
\bmdefine\Bc{ c}
\bmdefine\Bd{ d}
\bmdefine\Be{ e}
\bmdefine\Bf{ f}
\bmdefine\Bg{ g}
\bmdefine\Bh{ h}
\bmdefine\Bi{ i}
\bmdefine\Bj{ j}
\bmdefine\Bk{ k}
\bmdefine\Bl{ l}
\bmdefine\Bm{ m}
\bmdefine\Bn{ n}
\bmdefine\Bo{ o}
\bmdefine\Bp{ p}
\bmdefine\Bq{ q}
\bmdefine\Br{ r}
\bmdefine\Bs{ s}
\bmdefine\Bt{ t}
\bmdefine\Bu{ u}
\bmdefine\Bv{ v}
\bmdefine\Bw{ w}
\bmdefine\Bx{ x}
\bmdefine\By{ y}
\bmdefine\Bz{ z}
\bmdefine\BA{ A}
\bmdefine\BB{ B}
\bmdefine\BC{ C}
\bmdefine\BD{ D}
\bmdefine\BE{ E}
\bmdefine\BF{ F}
\bmdefine\BG{ G}
\bmdefine\BH{ H}
\bmdefine\BI{ I}
\bmdefine\BJ{ J}
\bmdefine\BK{ K}
\bmdefine\BL{ L}
\bmdefine\BM{ M}
\bmdefine\BN{ N}
\bmdefine\BO{ O}
\bmdefine\BP{ P}
\bmdefine\BQ{ Q}
\bmdefine\BR{ R}
\bmdefine\BS{ S}
\bmdefine\BT{ T}
\bmdefine\BU{ U}
\bmdefine\BV{ V}
\bmdefine\BW{ W}
\bmdefine\BX{ X}
\bmdefine\BY{ Y}
\bmdefine\BZ{ Z}

\begin{document}
\title{Singularity removal in nonlocal operator inequalities on domains and applications}
\author{Davit Harutyunyan\thanks{University of California Santa Barbara, harutyunyan@math.ucsb.edu}}
\date{}

\maketitle

\begin{abstract}

We prove that certain classes of nonlocal integral estimates on domains, involving operators, may be reduced to the local analogues without the singular weight on cubes. The theory applies to two-sided inverse doubling and quasi-monotone kernels with some moment conditions, and with a left-hand-side operator with average domain-bi-Lipschitz properties. Direct applications to fractional semi-norm inequalities in Peridynamics, such as fractional Korn's inequalities and Geometric Rigidity Estimates are presented.

\end{abstract}

\section{Introduction}
\label{sec:1}

Let $n,N,m_1,m_2\in\mathbb N$ and $p\in [1,\infty).$ Let the kernel $\rho$ and the operators $\BA$ and $\BB$ be defined as follows: Given a bounded domain $\GO\subset\mathbb R^n,$ let the kernel
\begin{equation}
\label{1.1}
\rho(x,y)\colon (\GO\times \GO)\setminus \{x=y\} \longrightarrow (0,\infty),
\end{equation} 
be positive, and assume for every connected measurable subset $E\subseteq\GO,$ the operator
$\BA(E,f)$ is defined as
\begin{equation}
\label{1.2}
\BA\colon \{E\}\times L^1_{loc}(\GO,\mathbb R^N)\longrightarrow L^1_{loc}(\GO,\mathbb R^{m_1}),
\end{equation}
and the operator $\BB$ is defined as
\begin{equation}
\label{1.3}
\BB\colon L^1_{loc}(\GO,\mathbb R^N)\longrightarrow L^1_{loc}(\GO \times \GO,\mathbb R^{m_2}).
\end{equation}
It is worth emphasizing, that the domain $\GO\subset\mathbb R^n$ is not a unique fixed domain in the above setting. \\

\textit{The objective of this manuscript is finding sufficient conditions under which there exists a 
non-degenerate cube $Q\subseteq \GO,$ 
such that the inequality
\begin{equation}
\label{1.4}
\int_{\GO}\int_{\GO}\rho(x,y)|\BA(Q,f)(x)-\BA(Q,f)(y)|^pdxdy\leq C\int_{\GO}\int_{\GO}\rho(x,y)|\BB f(x,y)|^pdxdy
\end{equation}
holds for all $f\in L^1_{loc}(\GO,\mathbb R^N),$ where the constant $C$ may depend on 
$n,N,m_1,m_2,p,\BA,\BB,\rho,$ and $\GO$.}\\
 
The setting (\ref{1.4}) is inspired by the so-called fractional Geometric Rigidity Estimate (abbreviated as FGRE) and the fractional Korn's first inequality (abbreviated as FKI), however, obviously, the scope of (\ref{1.4}) is much wider and includes more general operators than the ones generated by those inequalities. The more general operator estimate (\ref{1.4}) puts those rigidity estimates under one umbrella, as will be clear below, after formulating the two statements mentioned above. We proceed to the formulation of the most general form of the fractional Korn inequality known in the literature, which reads as follows:\\

\noindent \textbf{The FKI.} \textit{Let $\GO\subset\mathbb R^n$ be a bounded uniform domain\footnote{Note, that unifomity automatically implies openness and connectedness} and let $s\in (0,1),$ $1<p<\infty$ and $n\in\mathbb N,$ with $n\geq 2.$ 
There exists a constant $C,$ depending only on $n,s,p$ and $\GO$ such that for all vector fields 
$\Bu\in W^{s,p}(\Omega,\mathbb R^n)$ one has
\begin{equation}
\label{1.5}
 \inf_{\Ba\in \mathfrak{se}(n)}[\Bu-\Ba]_{W^{s,p}(\Omega)}^p\leq C[\Bu]_{\mathcal{X}^{s,p}(\Omega)}^p,
\end{equation}
where 
\begin{equation}
\label{1.6}
\mathfrak{se}(n)=\{\Ba(x)=\BA x+b\colon\mathbb R^n\to\mathbb R^n\}
\quad\text{with}\quad \BA\in\mathbb R^{n\times n},\ \BA+\BA^T=0,\ b\in\mathbb R^n,
\end{equation}
is the space of infinitesimal rigid motions of $\mathbb R^n.$ Each expression in (\ref{1.5}) is defined in the next lines below.}\\

Recall, that for $s\in (0,1),$ and for a vector field $\Bu\in L^p(\Omega,\mathbb R^n),$ the Gagliardo seminorm associated to the fractional $s-$derivative of $\Bu$ is given by 
\begin{equation}
\label{1.7}
[\Bu]_{W^{s,p}(\Omega)}^p=\int_{\Omega}\int_{\Omega}\frac{|\Bu(x)-\Bu(y)|^p}{|x-y|^{n+ps}}dx dy, 
\end{equation}
while the quantity 
$\|\Bu\|_{W^{s,p}(\Omega)}=[\Bu]_{W^{s,p}(\Omega)}+\|\Bu\|_{L^p(\Omega)}$ defines a norm 
[\ref{bib:Bou.Bre.Mir.1},\ref{bib:Maz.Sha.},\ref{bib:DiN.Pat.Val.}]. The nonlocal analogue of the linear elastic energy is the right-hand side of (\ref{1.5}) and is given by (see [\ref{bib:Mengesha.1}]): 
\begin{equation}
\label{1.8}
[\Bu]_{\mathcal{X}^{s,p}(\Omega)}^p=\int_{\Omega}\int_{\Omega}\frac{|(\Bu(x)-\Bu(y))\cdot(x-y)|^p}{|x-y|^{n+ps+p}}dxdy.
\end{equation}
The expression in (\ref{1.8}) has arisen in the theory of linear Peridynamics [\ref{bib:Silling.1},\ref{bib:Silling.2},\ref{bib:Sil.Ept.Wec.Xu.Ask.},\ref{bib:Du.Zho.}] as the energy in the small strain regime in the analogy of linear elasticity, where the symmetric gradient $e(\Bu)$ is the linear strain, and the integral $\int_\Omega |e(\Bu)|^p dx$ is a replacement of the linear elastic energy. The estimate in the above formulation (\ref{1.5}) was recently proven by Acosta and coauthors, see [\ref{bib:Aco.Dre.Dur.Lop.Oje.}, Theorem~6.1]. The story behind Korn's inequality (\ref{1.5}) goes back to Mengesha's paper [\ref{bib:Mengesha.1}], where it was first proven for the half-space in $L^2$ and for compactly supported fields and under the  condition $s\neq\frac{1}{2}.$ For later developments of (\ref{1.5}) see [\ref{bib:Men.Sco.1},\ref{bib:Men.Sco.2},\ref{bib:Rutkowski.2},\ref{bib:Har.Mik.1}] and [\ref{bib:Har.Men.Mik.Sco.}], where it was established for open, bounded, connected $C^1$ domains, before the uniform domain proof in [\ref{bib:Aco.Dre.Dur.Lop.Oje.}].  

Coming back to the FGRI, it is easy to prove that the measurable exact a.e. isometries (rigid motions) of $\mathbb R^n$ are of 
the form $\Br(x)=\BR x+b$, where $\BR\in \mathrm{O}(n)$ is an orthogonal matrix. Consequently, recall the standard notation
\begin{equation}
\label{1.9}
\mathfrak{so}(n)=\{\Br(x)=\BR x+b\colon\mathbb R^n\to\mathbb R^n\}
\qquad\text{with}\qquad \BR\in\mathrm{O}(n),\ b\in\mathbb R^n.
\end{equation} 
of the space of exact isometries of $\mathbb R^n.$ The FGRE would be the nonlinear analogue of (\ref{1.5}) with $\Br$ instead 
of $\Ba,$ the space $\mathfrak{so}(n)$ instead of $\mathfrak{se}(n),$ and a nonlinear elastic energy (yet to be defined) on the right-hand side instead of the linear counterpart. The nonlinear fractional elastic energy has to vanish on orientation preserving isometries of $\mathbb R^n,$ i.e., on rigid motions, because those carry zero elastic energy.  However, because in general there is no notion of orientation preservation for $L^1_{loc}$ fields, it is natural co consider $O(n)$ instead of $SO(n)$ in the nonlocal case. A natural energy density candidate that may appear in the nonlinear nonlocal elastic energy is the quantity 
$$
||\Bu(x)-\Bu(y)|-|x-y||,
$$
 which vanishes exactly on exact isometries, i.e., affine fields with a constant gradient in $O(n).$ The analogous estimate written for $SO(n)$ fails to hold. We refer the reader to [\ref{bib:Fri.Jam.Mue.}] for the classical local formulation (GRE) of the FGRE. The story behind the FGRI is worth telling. When we were at final stages of typing and polishing the current manuscript and the companion paper [\ref{bib:Har.Mik.2}], where we formulate and prove the FGRI and the FKI in bounded uniform domains for the case 
 $1\leq p<\infty$, e.g.,  [\ref{bib:Har.Mik.2}, Theorems~2.1-2.2], a formulation and proof of the FGRE was still open in public, and James Scott and Tadele Mengesha kindly shared pdf copies of their papers [\ref{bib:Scott}] and [\ref{bib:Men.Sco.3}] on the FGRE and the FKI with us. The paper [\ref{bib:Scott}] establishes the formulation and proof of the FGRI in open bounded connected Lipschitz subsets of $\mathbb R^n$ and with general radial kernes $\rho(|x-y|)$ that covers the standard Peridynamics kernel 
$\rho(x,y)=|x-y|^{-n-ps}$ appearing in (\ref{1.7}), (\ref{1.8}), and (\ref{1.5}), see [\ref{bib:Scott}, Theorem~2.1], while the paper [\ref{bib:Men.Sco.3}] proves FKI on bounded Lipschitz domains also for the endpoint cases $p=1,\infty,$ see [\ref{bib:Men.Sco.3}, Theorems~1.1-1.3]. Our assumptions on the kernel $\rho(x,y)$ are slightly different and our theorem on the FGRI proven in [\ref{bib:Har.Mik.2}, Theorem~2.1] based on the localization technique of the current manuscript is different, and we establish it on uniform domains, which strictly contains the class of Lipschitz domains. A detailed comparison of [\ref{bib:Har.Mik.2}, Theorem~2.1] and [\ref{bib:Scott}, Theorem~2.1] appears in Section~1 of [\ref{bib:Har.Mik.2}], were we refer the interested readers to. It is not surprising, that our formulation of the FGRI coincides with that of Scotts', where both naturally follow standard principles of mathematics and physics. A general heuristic formulation of the FGRI, without exact conditions on the kernel $\rho$ and the 
domain $\GO$ is as follows:\\ 

\noindent \textbf{The FGRI.} \textit{Let $n\in\mathbb N$ with $n\geq 2,$ let $1\leq p<\infty,$ and let $\GO\subset\mathbb R^n$ 
be an open bounded sufficiently regular (explicitly specified as Lipschitz in [\ref{bib:Scott}] and uniform in [\ref{bib:Har.Mik.2}]) 
domain. Assume the kernel  $\rho(x,y)\colon \GO^2\to[0,\infty)$ satisfies certain monotonicity, growth and integrability conditions (explicitly specified in [\ref{bib:Scott}] and [\ref{bib:Har.Mik.2}]). Then, there exists a constant $C,$ depending only on $n,p,\rho$ 
and $\GO$ such that for all vector fields $\Bu\in L^p(\GO,\mathbb R^n)$ one has}
\begin{align}
\label{1.10}
 \inf_{\Br \in \mathfrak{so}(n)}\int_{\GO}\int_{\GO} \rho(x,y)&\huge|\Bu(x)-\Bu(y)-[\Br(x)-\Br(y)]\huge|^pdxdy\\ \nonumber
 &\leq C\int_{\GO}\int_{\GO} \rho(x,y)\left| |\Bu(x)-\Bu(y)|-|x-y|\right|^pdxdy. 
\end{align}
Note, that if one takes $\mathrm{SO}(n)$ instead of $\mathrm{O}(n),$ then (\ref{1.10}) would fail, due to the possibility of choosing 
an isometry of the form $\Br(x)=(\BI-2\Be_1\otimes \Be_1)\BR x,$ with $\BR\in \mathrm{SO}(n),$ which is an orientation altering orthogonal transformation, that is en exact isometry of $\mathbb R^n.$ \\

Coming back to the general operator formulation (\ref{1.4}), note that both the FKI and FGRI are particular cases of it, where for each inequality, one has to define the operators $\BA$ and $\BB,$ which is done in Section~4.\\

The necessary conditions we would have to impose on the kernel $\rho,$ the operators $\BA$ and $\BB$ and the domain $\GO$
occur in the proof schemes. It is necessary and natural to assume translational quasi-invariance on $\rho$ in the context of (\ref{1.4}), which is the existence of a fixed constant $c>0$ such that $\rho(x+z,y+z)\leq c\rho(x,y)$ for all $x,y,z\in\mathbb R^n.$ However, this seemingly more ambitious translational quasi-invariance is in fact equivalent to the standard translational invariance $\rho(x+z,y+z)=\rho(x,y)$ in the context of (\ref{1.4}) after replacing the kernel $\rho$ by an equivalent translation-invariant kernel 
$\rho'(x,y)$ by defining 
$$
\rho'(x,y)=\inf_{z\in\mathbb R^n}\rho(x+z,y+z).
$$ 
Indeed, the translational quasi-invariance implies $ \rho(x,y)=\rho(x+z+(-z),y+z+(-z))\leq c\rho(x+z,y+z),$ and thus the equivalence 
$\rho'(x,y)\simeq \rho(x,y).$ Consequently, adopting notational simplicity, we can simply work under the condition 
\begin{equation}
\label{1.11}
\rho(x,y)=\rho(x-y),\qquad\text{ for all} \qquad x,y\in\mathbb R^n\qquad \text{with} \qquad x\neq y,
\end{equation}
which we assume in what follows. Other conditions that we impose upon $\rho$ for the analysis to go through are the two-sided 
inverse doubling property, the quasi-decreasing property in $|x|$ and a moment condition. We assume the following explicit hypotheses on $\rho:$\\

\noindent\textbf{The hypotheses on $\rho$.} 
\begin{itemize}
\item[(H$\rho$1)](quasi-decreasing). There exist a constant $c_1>0$ such that for all $x,y\in\mathbb R^n$ with $|x|\leq |y|$ one has
\begin{equation}
\label{1.12}
\rho(y) \leq c_1\rho(x).
 \end{equation}
 \item[(H$\rho$2)](inverse doubling). There exist constants $0<c_2\leq c_2'<1$ such that for all  $x,y\in\mathbb R^n$ with $|x|=|y|\neq 0$
 one has
\begin{equation}
\label{1.13}
c_2\rho(x) \leq \rho(2y)\leq c_2' \rho(x).
 \end{equation}
 \end{itemize}
By notational abuse, define for $t> 0$
\begin{equation}
\label{1.14}
\rho(t)=\rho(te_1),
\end{equation}
which is an equivalent radial kernel. Then it is easy to see that we have $\rho(t)\colon (0,\infty)\to (0,\infty)$ by (H$\rho$1)-(H$\rho$2). Also, we get from (H$\rho$1)-(H$\rho$2) the one dimensional analogous conditions (H$\rho$1')-(H$\rho$2'). We impose the average 
moment condition (H$\rho$3) below in addition.
\begin{itemize}
\item[(H$\rho$1')](quasi-decreasing). For all $0<t_1\leq t_2<\infty,$ one has
\begin{equation}
\label{1.15}
\rho(t_2) \leq c_1\rho(t_1).
 \end{equation}
 \item[(H$\rho$2')](inverse doubling).  For all $t>0$ one has 
\begin{equation}
\label{1.16}
c_2\rho(t) \leq \rho(2t)\leq c_2' \rho(t).
 \end{equation}
\item[(H$\rho$3).] There exists a constant $c_0>0$ such that
\begin{equation}
\label{1.17}
\frac{1}{a^{n+p}\rho(a)}\int_0^a t^{n+p-1}\rho(t)dt\leq c_3 \qquad\text{for all}\qquad a>0.
\end{equation}
\end{itemize}
In Section~2, in some of the theorems, we will assume an additional moment condition on $\rho,$ thus we will save (H$\rho$4) 
for that purpose to label the condition. In most applications, the kernel $\rho$ is singular with $\lim_{x\to 0}\rho(x)=\infty,$ 
otherwise one can simply unweight the inequality by discarding the weight from it. Note that conditions (H$\rho$1') and 
(H$\rho$2') together automatically imply that $\rho$ is singular at the origin, because for $t\in[2^{-k-1},2^{-k}),$ one has 
$$
\rho(t)\geq \frac{1}{c_1}\rho(2^{-k})\geq \frac{1}{c_1(c_2')^k}\rho(1)\to \infty \qquad \text{as} \qquad k\to\infty.
$$ 
Also, in most applications one has the case of a radial kernel $\rho(x,y)=\rho(|x-y|),$ in particular, in Peridynamics, where our motivations comes from, one has 
\begin{equation}
\label{1.18}
\rho(x,y)=\frac{1}{|x-y|^{n+sp}},
\end{equation}
where $s\in (0,1)$ is the derivative measure. It is easy to verify that the Peridynamics kernel fulfills 
of the hypotheses (H$\rho$1)-(H$\rho$3), with 
$$
c_1=1,\qquad c_2=c_2'=2^{-n-ps},\qquad c_3=\frac{1}{p(1-s)}.
$$
Thus our results are applicable to Peridynamics. In the case when the density $\rho$ is singular, then (\ref{1.4}) becomes a 
nonlocal integral inequality. Those kind of estimates play a central role in the theory of Perydinamics, where our main motivation 
came from and where particular cases of inequality (\ref{1.4}) arise. However, (\ref{1.4}) can be applicable in 
fractional calculus in general. We will discuss applications to Peridynamics  in Section~4. Next, on the operator $\BA,$ we 
will impose an average bi-Lipschitz condition on cubes as follows. For any two bounded measurable sets 
$E,F\subseteq \GO,$ denote
 \begin{equation}
\label{1.19}
\BGd_{E,F}(x)=\BA(E, f)(x)-\BA(F,f)(x).
\end{equation}
We assume the following conditions on $\BA.$\\

\noindent\textbf{The hypotheses on $\BA$.} 

\begin{itemize}
\item[(HA1).] There exists a constant $C_1=C_1(\BA)>0$ such that for every cubes $P_1,P_2,P,Q\subseteq\GO$ such that 
$P_1\cap P_2\neq \emptyset$ one has
\begin{align}
\label{1.20}
\fint_{P_1} \fint_{P_2} |\BGd_{P,Q}(x)- & \BGd_{P,Q}(y)|^p dxdy\\ \nonumber
 &\leq C_1(\BA) \left(\frac{\mathrm{diam}(P_1\cup P_2)}{\mathrm{diam}(P\cup Q)}\right)^p
 \fint_{P} \fint_{Q} |\BGd_{P,Q}(x)-\BGd_{P,Q}(y)|^p dxdy
\end{align}
\item[(HA2).] There exists a constant $C_2=C_2(\BA)>0$ such that for every cubes $P_1,P,Q\subseteq\GO$ with 
$P \cap Q\neq \emptyset,$  one has
\begin{align}
\label{1.21}
\fint_{P} \fint_{Q} |\BGd_{P,Q}(x)- & \BGd_{P,Q}(y)|^p dxdy\\ \nonumber
& \leq C_2(\BA)\left(\frac{\mathrm{diam}(P\cup Q)}{\mathrm{diam} \ P_1}\right)^p
\fint_{P_1} \fint_{P_1} |\BGd_{P,Q}(x)-\BGd_{P,Q}(y)|^p dxdy
\end{align}
\end{itemize}

We impose no hypotheses on the operator $\BB$. \\

\noindent\textbf{The hypotheses on the domain $\GO$.}\\

 We will prove one of the main theorems (Theorem~2.2) of the paper in  uniform domains. These are certain type of John domains that strictly include Lipschitz domains. We recall the definitions of John and uniform domains below.
 
\begin{definition}[John domain] A bounded domain $\GO\subset\mathbb R^n$ is called a John domain, if there exists a constant 
$0<c\leq 1$ and a fixed point $x_0\in \GO$ such that for every other point $x\in\GO,$ there exists a rectifiable curve 
$\Gg\colon[0,s]\to \GO,$ parameterized by its arc length, that starts at $x,$ ends at $x_0,$ and satisfies 
$\mathrm{dist}(\Gg(t),\partial\GO)\geq ct$ for all $t\in [0,s],$ where $s$ is the length of $\Gg.$ 
\end{definition}
We refer the reader to the original paper of John [\ref{bib:John}] and the paper [\ref{bib:Mar.Sar.}] for more details on John domains. 
Next we recall the definition of uniform domains. 

\begin{definition}[Uniform domain] A domain $\GO\subset\mathbb R^n$ is called a uniform domain, if there exists a constant 
$C\geq 1,$ such that for every two points $x,y\in\GO,$ there exists a rectifiable curve $\Ga\colon[0,s]\to \GO,$ 
that starts at $x,$ ends at $y,$ and satisfies $\min(\ell(x,\Gg(t)),\ell(y,\Gg(t)))\leq C\mathrm{dist}(\Gg(t),\partial\GO)$ 
for all $t\in [0,s],$ where $\ell(z,\Gg(t))$ is the length of the curve between the points $z$ and $\Gg(t)$ for $z=x,y.$
\end{definition}
We refer the reader to [\ref{bib:Mar.Sar.},\ref{bib:Geh.Osg.},\ref{bib:Jones},\ref{bib:Jer.Ken.}] for more details on uniform domains. The class of Lipschitz domains is a proper subset of the class of uniform domains, while the later is a proper subset of the class of John domains. Also, John domains are open, bounded and simply connected by definition.

\section{Main results}
\setcounter{equation}{0}
\label{sec:2}

In what follows, $\GO\subset\mathbb R^n$ is an open, bounded, connected uniform domain. The below theorems contain the main results of the paper. The second and third theorem reduce the inequality to the case of nested concentric cubes, while the first theorem reduces the complexity of the problem further, reducing the estimate to an analogous unweighted estimate, making it local. 

\begin{theorem}[Singularity removal]
\label{Thm:2.1}
Let $n\geq 2,$ $1\leq p<\infty$ and take $\GO=P\subset\mathbb R^n$ to be a non-degenerate cube in (\ref{1.1})-(\ref{1.3}). Assume the kernel $\rho$ satisfy hypotheses (H$\rho$1')-(H$\rho$3) and the operator $\BA$ satisfy hypotheses (HA1)-(HA2). Then statement (i) implies statement (ii) below. 
\begin{itemize}
\item[(i)] There exists a constant $C_3=C_3(n,p,\BA,\BB)$ such that for every cube $Q\subseteq P,$ the unweighted inequality 
\begin{equation}
\label{2.1}
\int_{Q}|\BA(Q,f)(x)|^pdx\leq \frac{C_3}{|Q|}\int_{4Q}\int_{4Q}|\BB f(x,y)|^pdxdy.
\end{equation}
holds for all $f\in L^1_{loc}(P,\mathbb R^N).$ 
\item[(ii)] There exists a constant $\widetilde C_3 =\widetilde  C_3(n,p,\BA,\BB)$ such that for every cube $Q\subseteq P,$ the weighted inequality
\begin{equation}
\label{2.2}
\int_{Q}\int_{Q}\rho(x,y)|\BA(Q,f)(x)-\BA(Q,f)(y)|^pdxdy\leq \widetilde C_3(c_3^p+1)\int_{4Q}\int_{4Q}\rho(x,y)|\BB f(x,y)|^pdxdy
\end{equation}
holds for all $f\in L^1_{loc}(P,\mathbb R^N).$ The constant $c_3$ in (\ref{2.2}) is coming from (H$\rho$3).

\end{itemize}
\end{theorem}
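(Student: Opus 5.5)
The plan is to decompose $Q\times Q$ dyadically near the diagonal and to compare the function $g:=\BA(Q,f)$ on small cubes with $\BA(R,f)$ for those cubes $R$. Statement (i) controls $\BA(R,f)$ on each small cube. Hypotheses (HA1)--(HA2) control the discrepancies $\BGd_{R,R'}$ between consecutive dyadic generations. The moment condition (H$\rho$3) makes the sum over scales converge.

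\emph{Step 1: reduction to dyadic pairs.} Let $\ell$ be the side of $Q$. For $k\ge 0$ let $\mathcal D_k$ be the dyadic subcubes of $Q$ of side $2^{-k}\ell$. For $x,y\in Q$ with $2^{-k-1}\ell\le |x-y|<2^{-k}\ell$, the points $x,y$ lie in two touching cubes $R_1,R_2\in\mathcal D_{k+1}$. By (H$\rho$1') and (H$\rho$2'), $\rho(x-y)\simeq \rho(2^{-k}\ell)$ on this shell. Hence the left-hand side of (\ref{2.2}) is bounded by
$$
C\sum_{k\ge0}\rho(2^{-k}\ell)\sum_{R_1\cap R_2\neq\emptyset,\ R_i\in\mathcal D_{k+1}}\int_{R_1}\int_{R_2}|g(x)-g(y)|^p\,dxdy .
$$
Each $R_1$ has boundedly many neighbours. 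It therefore suffices, up to one more dyadic generation, to bound $\int_{R}\int_{R}|g(x)-g(y)|^p$ for $R\in\mathcal D_k$ (with touching pairs handled by passing to the common parent region).

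\emph{Step 2: telescoping through ancestors.} Let $R=R_k\subset R_{k-1}\subset\dots\subset R_0=Q$ be the dyadic ancestors of $R$. On $R$ we write
$$
g=\BA(R,f)-\sum_{j=1}^{k}\BGd_{R_j,R_{j-1}} .
$$
The first term is handled by (i) applied to the cube $R$: $\int_R\int_R|\BA(R,f)(x)-\BA(R,f)(y)|^p\lesssim |R|\int_R|\BA(R,f)|^p\le C_3\int_{4R}\int_{4R}|\BB f|^p$. Since $|x-y|\lesssim 2^{-k}\ell$ on $4R$, this last integral is at most $C\rho(2^{-k}\ell)^{-1}\int_{4R}\int_{4R}\rho|\BB f|^p$.

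For the $j$-th discrepancy we argue in two moves:
\begin{itemize}
\item[(a)] Apply (HA1) with $P_1=P_2=R$, $P=R_j$, $Q=R_{j-1}$. This bounds $\fint_R\fint_R|\BGd(x)-\BGd(y)|^p$ by $C2^{-(k-j)p}\fint_{R_j}\fint_{R_{j-1}}|\BGd(x)-\BGd(y)|^p$.
\item[(b)] Apply (HA2) with $P_1=R_j$, where the ratio of diameters is bounded, and then the triangle inequality together with (i) for $R_j$ and $R_{j-1}$. This gives the bound $C|R_j|^{-2}\int_{4R_{j-1}}\int_{4R_{j-1}}|\BB f|^p\lesssim |R_j|^{-2}\rho(2^{-j}\ell)^{-1}\int_{4R_{j-1}}\int_{4R_{j-1}}\rho|\BB f|^p$.
\end{itemize}
Since the number of terms $k$ grows, I will use Hölder's inequality with geometric weights $2^{(k-j)\varepsilon}$ instead of $k^{p-1}$, to keep the sum summable.

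\emph{Step 3: summation, the main obstacle.} After Steps 1--2, the energy $\int_{4S}\int_{4S}\rho|\BB f|^p$ of a fixed cube $S\in\mathcal D_j$ appears once for each descendant $R\in\mathcal D_k$, $k\ge j$. There are $2^{(k-j)n}$ such descendants. Each appearance carries the factor
$$
\rho(2^{-k}\ell)\,|R|^2\,2^{-(k-j)p}\,|R_j|^{-2}\rho(2^{-j}\ell)^{-1}\simeq \frac{\rho(2^{-k}\ell)\,2^{-(k-j)(2n+p)}}{\rho(2^{-j}\ell)} .
$$
Multiplying by the $2^{(k-j)n}$ descendants, the total coefficient is
$$
\sum_{k\ge j}\frac{\rho(2^{-k}\ell)\,(2^{-k}\ell)^{n+p}}{\rho(2^{-j}\ell)\,(2^{-j}\ell)^{n+p}}\lesssim \frac{1}{a^{n+p}\rho(a)}\int_0^{a}t^{n+p-1}\rho(t)\,dt\le c_3,\qquad a=2^{-j}\ell ,
$$
using the dyadic discretisation allowed by (H$\rho$1')--(H$\rho$2'). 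This is exactly where (H$\rho$3) enters. The $\varepsilon$-loss from Hölder's inequality must be absorbed; I expect to take $\varepsilon$ small and use the upper inverse-doubling constant $c_2'<1$, giving the factor $(c_3^p+1)$ after raising to the $p$-th power.

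Finally, the cubes $4S$ for $S\in\mathcal D_j$ overlap boundedly. Summing over $j$, each pair $(x,y)\in 4Q\times4Q$ is counted only at scales $2^{-j}\ell\gtrsim|x-y|$. The resulting ratio $\rho(2^{-j}\ell)/\rho(x-y)$ must then be summed over $j$, which converges by the upper bound $c_2'<1$ in (H$\rho$2'). This yields (\ref{2.2}). The delicate points are the bookkeeping of the geometric factors in Step 3 and checking that the cube $4R_j$ stays inside $4Q$ (or inside $P$), which may require replacing $4R$ by slightly smaller dilates.
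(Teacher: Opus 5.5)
Your architecture (distance shells, telescoping along dyadic ancestors, (HA1) to reach the ancestor pair with gain $2^{-(k-j)p}$, (HA2) plus (i) for each link, H\"older with geometric weights, counting $2^{(k-j)n}$ descendants, then summing $\rho(2^{-j}\ell)$ over scales $2^{-j}\ell\gtrsim|x-y|$ using $c_2'<1$) is the paper's. The step you flagged, however, does not close as proposed. After H\"older, the coefficient of the unweighted energy of a cube $S\in\mathcal D_j$ is $C(\varepsilon,p)\sum_{i\ge 0}\rho(2^{-j-i}\ell)\,2^{-i(n+p-\delta)}$ with $\delta=\varepsilon(p-1)>0$. (H$\rho$3) controls this sum directly only for $\delta=0$.

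Taking $\varepsilon$ small and invoking $c_2'<1$ cannot help. The constant $c_2'$ bounds $\rho$ at small scales from \emph{below}. The lower constant $c_2$ would help only if $c_2>2^{-(n+p-\delta)}$, and (H$\rho$1')--(H$\rho$3) do not imply this: a slowly growing kernel that jumps by a factor $M\gg 2^{n+p}$ at every $N$-th dyadic scale, with $N$ large, satisfies them with $c_2\le 1/M$.

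The missing idea is the self-improvement of (H$\rho$3), i.e.\ the $\delta$-margin bound (\ref{3.22}): $\int_0^a t^{n+p-1-\delta}\rho(t)\,dt\le 2c_3a^{n+p-\delta}\rho(a)$ for $\delta=1/(2c_3)$. It follows by integrating by parts against $\varphi(t)=\int_0^t s^{n+p-1}\rho(s)\,ds$ and absorbing. Discretely, (H$\rho$3) at every scale gives $T_m\le K(T_m-T_{m+1})$ for the tails $T_m=\sum_{i\ge m}\rho(2^{-i}a)2^{-i(n+p)}$, hence geometric decay. One then takes $\varepsilon=\delta/(p-1)=1/(2c_3(p-1))$. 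The resulting $C(\varepsilon,p)\sim\varepsilon^{1-p}\sim c_3^{p-1}$, times the $c_3$ from the moment sum, is exactly what produces the factor $c_3^p$ in (\ref{2.2}). Your sketch has no mechanism for that dependence.

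The second gap is the reduction in Step 1. Points with $2^{-k-1}\ell\le|x-y|<2^{-k}\ell$ lie in equal or touching cubes of $\mathcal D_k$, not necessarily of $\mathcal D_{k+1}$. More seriously, touching dyadic cubes $R_1,R_2$ generally have no common parent of comparable size: across a midplane of $Q$ their first common ancestor is $Q$ itself. So the blocks $R_1\times R_2$ do not reduce to diagonal blocks $R\times R$. Your telescoping cannot treat them directly either, because (i) applied to $R_1$ gives no control of $\BA(R_1,f)$ on $R_2$.

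The paper avoids this with the non-dyadic tripled cube $3q_J(x)$, which contains the whole shell $S_J(x)$. It applies (i) to $3q_J$, using $12q_J\subset 4Q$ for $J\ge 3$, and telescopes along the nested tripled ancestors $3q_j$. Pairs with $|x-y|>2^{-3}\ell$ are handled separately via (i) on $Q$.

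Also keep the energies unweighted until the final sum over scales, as your closing paragraph effectively does. Converting $\int_{4R}\int_{4R}|\BB f|^p$ into $\rho(2^{-k}\ell)^{-1}\int_{4R}\int_{4R}\rho|\BB f|^p$ already in Step 2 would count each pair about $\log_2(\ell/|x-y|)$ times.
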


\vspace{0.1cm}

\begin{remark}
\label{Rem.2.2}
A careful inspection reveals, that if one chooses the geometric ratio appropriately, instead of $2,$ then the proof of Theorem~2.1 can be adjusted to every $\Gs>1$ geometric ratio and hence the proof of Theorem~2.1 holds for $\sigma-$enlarged cubes ($\Gs Q$ instead of $4Q$) for every $\Gs>1.$ On the other hand, combining Theorems~2.1 and 2.3, one gets Theorem~2.1 with $\Gs=1$ as well, i.e., without enlarging the cubes. 
\end{remark}

\vspace{0.1cm}

\begin{theorem}[Domain reduction]
\label{Thm:2.3}
Let $n\geq 2$ and $1\leq p<\infty,$ let $\GO\subset\mathbb R^n$ be a bounded uniform domain, and let the operator $\BA$ satisfy hypothesis (HA1)-(HA2). Assume the kernel $\rho$ satisfies hypotheses (H$\rho$1')-(H$\rho$3), and in addition hypothesis 
(H$\rho$4) below:
\begin{itemize}
\item[(H$\rho$4)] There exists a constant $c_4>0$ such that 
\begin{equation}
\label{2.3}
\int_a^1 t^{n-1}\rho(t)dt\leq c_4a^n\rho(a),\qquad\text{for all}\qquad 0<a<1.
\end{equation}
\end{itemize}
Then statement (i) implies statement (ii) below. 
\begin{itemize}
\item[(i)] There exists a constant $C_4=C_4(n,p,\rho,\BA,\BB)$ such that for every cube $Q\subset\mathbb R^n$ such that $4Q\subseteq\GO,$ the inequality on concentric cubes
\begin{equation}
\label{2.4}
\int_{Q}\int_{Q}\rho(x,y)|\BA(Q,f)(x)-\BA(Q,f)(y)|^pdxdy\leq C_4\int_{4Q}\int_{4Q}\rho(x,y)|\BB f(x,y)|^pdxdy
\end{equation}
holds for all $f\in L^1_{loc}(\GO,\mathbb R^N).$  
\item[(ii)] There exists a constant $\widetilde C_4=\widetilde  C_4(n,p,\rho,\GO,\BA,\BB)$ and a non-degenerate cube 
$Q\subset \GO,$ such that the inequality
\begin{align}
\label{2.5}
\int_{\Omega}\int_{\Omega}\rho(x,y)|\BA(Q,f)(x)&-\BA(Q,f)(y)|^pdxdy
\leq \widetilde  C_4\int_{\Omega}\int_{\Omega}\rho(x,y)|\BB f(x,y)|^pdxdy,
\end{align}
holds for all $f\in L^1_{loc}(\GO,\mathbb R^N).$ 
\end{itemize}

\end{theorem}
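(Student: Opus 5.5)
The plan is to prove (ii) by a Whitney-cube decomposition of the uniform domain $\GO$ combined with a chaining argument. The chain inequalities use (HA1)--(HA2); the far-range interactions use the kernel hypotheses.

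\emph{Decomposition and chains.} Let $\mathcal W=\{Q_j\}$ be a Whitney decomposition of $\GO$ with $4Q_j\subseteq\GO$ and bounded overlap of the $4Q_j$. Fix a central cube $Q=Q_0$ of maximal size. Uniformity gives, for each Whitney cube $Q_j$, a chain $Q_0=P_0,P_1,\dots,P_k=Q_j$ of Whitney cubes with the following properties: consecutive cubes touch, have comparable sizes, and have size at least comparable to the distance back along the chain (a twisted cone / shadow structure). The shadows satisfy the standard counting bound: for every $R\in\mathcal W$,
\[
\sum_{Q_j\colon R\in\mathrm{chain}(Q_j)}|Q_j|\lesssim |R| .
\]
For a pair $P,P'$ of neighbouring cubes, take $\widetilde P$ to be a cube with $P\cup P'\subseteq\widetilde P$, $4\widetilde P\subseteq\GO$, and $\widetilde P$ comparable in size to $P$.

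\emph{Splitting the energy and the local terms.} Write $g=\BA(Q,f)$ and split the left-hand side of (\ref{2.5}) into near pairs, where $y\in 4Q_j$ whenever $x\in Q_j$, and far pairs. For near pairs on $Q_j\times Q_j$, write $\BA(Q,f)=\BA(Q_j,f)+\BGd_{Q,Q_j}$.
\begin{itemize}
\item The term $\BA(Q_j,f)$ is controlled directly by (\ref{2.4}).
\item The term $\BGd_{Q,Q_j}$ is handled by (HA1) applied with $P_1=P_2=Q_j$. Quasi-monotonicity (H$\rho$1') and the moment condition (H$\rho$3) then convert the integral $\int_{Q_j}\int_{Q_j}\rho\,|\BGd(x)-\BGd(y)|^p$ into $\ell(Q_j)^{2n+p}\rho(\ell(Q_j))$ times $\mathrm{diam}(Q\cup Q_j)^{-p}$ times the averaged oscillation of $\BGd_{Q,Q_j}$ over $Q\times Q_j$.
\item This oscillation is telescoped along the chain, $\BGd_{Q,Q_j}=\sum_i\BGd_{P_{i-1},P_i}$. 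Each link is estimated by (HA2) on $\widetilde P_i$, then by (\ref{2.4}) on $\widetilde P_i$ together with the lower bound $\rho\gtrsim\rho(\ell)$ on $\widetilde P_i\times\widetilde P_i$ (from (H$\rho$1')). Each link is therefore bounded by $\ell(P_i)^{-2n}\rho(\ell(P_i))^{-1}E(4\widetilde P_i)$, where $E(S)$ denotes the right-hand energy on $S\times S$.
\end{itemize}

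\emph{Far pairs and the main obstacle.} For far pairs $x\in Q_j$, $y\in Q_k$ with $|x-y|\gtrsim\max(\ell_j,\ell_k)$, the kernel is bounded by $\rho(|x-y|)$. I would split $|g(x)-g(y)|\le|g(x)-c_j|+|c_j-c_k|+|c_k-g(y)|$, where $c_j$ is the mean of $g$ on $Q_j$. The integration in $y$ of $\rho$ over $|x-y|>a$ is exactly where (H$\rho$4) enters: it yields $\int_{|z|>a}\rho\lesssim a^n\rho(a)$, which reduces the terms $|g-c_j|$ to the near-pair estimate. The differences of means $|c_j-c_k|$ are again chained through the common ancestor.

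The main obstacle is summing the resulting double series. After Hölder in the chain sums, one needs the geometric weights $\ell^{n+p}\rho(\ell)$ to decay along chains and to absorb the polynomial factors produced by Hölder. For this I would use the inverse doubling (H$\rho$2'): $\rho(2t)\le c_2'\rho(t)$ gives a geometric gain of the form $(\ell_j/\ell_i)^{\varepsilon}$ at each scale. Combined with the shadow counting bound and bounded overlap, the whole sum collapses to $C\sum_R E(4\widetilde R)\lesssim\int_\GO\int_\GO\rho\,|\BB f|^p$, which is (\ref{2.5}). If the decay turns out to be insufficient for $p$ large, I would apply Hölder with weights $\ell(P_i)^{\varepsilon}$, as in Hardy-type chain arguments.
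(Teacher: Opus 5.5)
\textbf{Far pairs.} The far-pair part has a genuine gap. Chaining $|c_j-c_k|$ ``through the common ancestor'' of your rooted chains cannot work. The last common cube of $[Q_0,Q_j]$ and $[Q_0,Q_k]$ can have size comparable to $\mathrm{diam}\,\GO$ even when $Q_j,Q_k$ are at distance $d\ll\mathrm{diam}\,\GO$, for instance cubes straddling the interface between the shadows of two siblings. The large weight $\rho(d)$ then gets paired with oscillations at every scale up to the root, and the exchanged series is not controlled by $\sum_R E(4\widetilde R)$.

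A sanity check shows the problem. Your argument uses uniformity only through rooted (John-type) chains, so it would also prove (ii) on a planar slit disk, where (ii) is false. Take $\BA(Q,f)=f-\fint_Q f$; then $\BGd_{P,Q}$ is constant, so (HA1)--(HA2) hold trivially. Take $\BB f(x,y)=(f(x)-f(y))\mathds{1}_{\{|x-y|<\theta\delta(x)\}}$ with $\theta$ close to $1$, so that (i) holds with $C_4=1$, and $\rho=|z|^{-2-ps}$ with $ps\ge 1$. For a smooth $f$ with bounded gradient that equals $0$ just above and $1$ just below a piece of the slit, the pairs across the slit carry infinite energy, while the truncated energy is finite.

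What is missing is the genuinely uniform input. You need two-point chains from $Q_j$ to $Q_k$ whose largest cube has size comparable to $d$ (as in Jones or Prats--Saksman). You also need a $\delta$-margin version of (H$\rho$4), namely $\int_a^1 t^{n-1+\delta}\rho(t)\,dt\le 2c_4a^{n+\delta}\rho(a)$ with $\delta=1/(2c_4)$, which follows from the same integration by parts as (\ref{3.22}). The paper sidesteps all of this by citing Rutkowski's theorem (Theorem~\ref{Thm:3.3}) at the outset. That theorem discards every pair outside $B(x,\theta\delta(x))$ and reduces the left side of (\ref{2.5}) to $\sum_j\int_{2Q_j}\int_{2Q_j}$; after that, only the rooted chains of Lemma~\ref{Lem:3.2} are needed.

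The geometric gain in the chain sums also does not come from inverse doubling, since (H$\rho$2') with arbitrary $c_2$ gives no margin. The paper obtains the needed shadow bound $\sum_{P\le Q}\ell_P^{2n+p-\delta}\rho(\ell_P)\lesssim c_3\ell_Q^{2n+p-\delta}\rho(\ell_Q)$ from (H$\rho$3) via (\ref{3.22}).

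\textbf{Link estimates.} Two smaller steps in the near-pair part also fail as written, though both can be repaired.

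First, (\ref{2.4}) on a cube $R$ controls only $\BA(R,f)$ on $R$. So applying (HA2) with test cube $\widetilde P_i\supseteq P_{i-1}\cup P_i$ and then (\ref{2.4}) on $\widetilde P_i$ says nothing about $\BGd_{P_{i-1},P_i}=\BA(P_{i-1},f)-\BA(P_i,f)$. The test cube must lie where both $\BA(P_{i-1},f)$ and $\BA(P_i,f)$ are controlled, and touching Whitney cubes overlap only in a null set. The paper therefore chains the doubled cubes $2P_m$ and uses a cube $S_m\subseteq 2P_m\cap 2P_{m+1}$ of comparable size; this is why (C1) is arranged so that $8P_m\subseteq\GO$. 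Alternatively, insert $\widetilde P_i$ as an extra link and take $P_1=P_{i-1}$, resp.\ $P_1=P_i$, in (HA2).

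Second, applying (HA1) to $\BGd_{Q,Q_j}$ before telescoping leaves each link's oscillation over the separated product $Q\times Q_j$. (HA2) does not control that quantity, and (HA1) cannot move it back to $P_{i-1}\times P_i$ because it requires $P_1\cap P_2\neq\emptyset$. Telescope first on $2Q_j\times 2Q_j$, then apply (HA1) with $P_1=P_2=2Q_j$ to each link. Pass to the weighted form through the Lipschitz bound (\ref{3.33}) and (H$\rho$3), as in (\ref{3.34})--(\ref{3.36}).
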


\vspace{0.1cm}

\begin{remark}
\label{Rem.2.2}
It is easy to verify, that given hypotheses (H$\rho1'$)-(H$\rho2'$), hypothesis (H$\rho3$) may be relaxed to hold only locally for 
$a\in (0,a_0)$ for some $a_0>0,$ and in hypotheses (H$\rho4$) the integration can be relaxed to be on $(a,a_0)$ for $0<a<a_0.$   
\end{remark}


\section{Proof of main results}
\setcounter{equation}{0}
\label{Sec:3}

\subsection{Proof of Theorem~2.1}

\begin{proof}[Proof of Theorem~2.1]
We can assume without loss of generality that $\BB(E)<\infty,$ where we set (omitting $f$ when no ambiguity is present) for a bounded measurable set $E\subset\mathbb R^n$ and a measurable field $f\colon E\to\mathbb R^N:$ 
\begin{equation}
\label{3.1}
\BB(E)=\int_{E}\int_{E}\rho(x,y)|\BB f(x,y)|^pdxdy,
\end{equation}
and the unweighted averages
\begin{equation}
\label{3.2} 
m(E)=\fint_E\fint_E |\BB f(x,y)|^p dx dy.
\end{equation}
In what follows, the constant $C$ may only depend on $n,p,N,m_1,m_2,c_1,c_2,c_3',C_3,C_1(\BA),$ and $C_2(\BA),$ where the dependence on the constant $c_3,$ which is an important characteristic of the kernel $\rho,$ will be shown explicitly. This is important in applications, because $c_3=\frac{1}{p(1-s)}$ for the Peridynamics kernel (\ref{1.18}), for which we will get the explicit blow-up asymptotics of the constant in nonlocal inequalities with the Peridynamics singular weight. Also, whenever necessary, we will provide additional details of why the corresponding steps are valid for $p=1$ and how the constants are affected (or not affected). Let $\ell_P$ denote the side length of the cube $P,$ where we will abbreviate for notational simplicity $\ell:=\ell_Q$ for the fixed cube $Q$ in consideration. For any positive integer $j,$ we divide $4Q$ into $2^{(j+2)n}$ dyadic cubes of size $2^{-j}\ell$ each, and denote by 
$\mathcal J_j$ the family of those (generation-$j$) sub-cubes of $4Q.$ Let also $\mathcal I_j$ be the generation-$j$ sub-cubes of $4Q$ meeting $Q$. We will use the lowercase letter $q$ to denote a dyadic sub-cube. It is clear that if $j \geq 3$ then $12q\subset  4Q$ for all $q\in\mathcal I_j.$ For every such cube $q$, we have by (\ref{2.1}) the inequality
\begin{equation}
\label{3.3}
\int_{3q}|\BA(3q,f)(x)|^pdx\leq C|q|m(12q).
\end{equation}

 We divide the proof into 3 steps. \\
 
\noindent\textbf{Step 1. Decomposition by distance.}
We separate the pairs with $|x-y|>2^{-3}\ell$ from those at smaller distances. By Minkowski, the inequality (\ref{2.1}), and the hypotheses (H$\rho$1')-(H$\rho$2'), we have the obvious estimate on the large-distance product set 
\begin{align}
\label{3.4}
 \int_{Q} \int_{Q\cap \{|x-y|>2^{-3}\ell\}} & \rho(x-y)|\BA(Q,f)(x)-\BA(Q,f)(y)|^pdydx \\ \nonumber
& \leq  C\rho\left(\frac{\ell}{8}\right)\int_{Q} \int_{Q\cap \{|x-y|>2^{-3}\ell\}}\left(|\BA(Q, f)(x)^p+|\BA(Q, f)(y)|^p\right)dxdy \\ \nonumber
& \leq  \frac{C}{\rho(4\sqrt{n}\ell)}\rho\left(\frac{\ell}{8}\right)\int_{4Q} \int_{4Q}\rho(x-y)|\BB f(x,y)|^pdxdy\\ \nonumber  
 & \leq C\int_{4Q} \int_{4Q}\rho(x-y)|\BB f(x,y)|^pdxdy.
\end{align}
To deal with the remaining pairs, for $J\ge 3$ define the "dyadic" $x-$annuli
\begin{equation}
\label{3.5}
 S_J(x)=\{y\in Q \ :  \ 2^{-J-1}\ell <|x-y|\le2^{-J}\ell  \},
\end{equation}
where we have a definite sharp control of the weight. For almost every $x\in Q$, let $q_J(x)$ be the unique generation-$J$ dyadic 
cube containing $x$ (all dyadic boundaries together have measure zero). If $q=q_J(x)$ and $y\in S_J(x)$, then obviously 
$y\in3q_J.$ On this pair we can decompose 
\begin{align*}
\BA&(Q, f)(x)-\BA(Q, f)(y)\\
=&\big[(\BA(Q, f)-\BA(3q_J,f))(x)-(\BA(Q, f)-\BA(3q_J,f))(y)\big]+\BA(3q_J,f))(x)-\BA(3q_J,f))(y),
\end{align*}
Consequently, we have by Minkowski inequality
\begin{align}
\label{3.6}
|\BA(Q, f)(x)&-\BA(Q, f)(y)|^p\\ \nonumber
&\leq 3^{p-1}|\BGd_{Q,3q_J}(x)-\BGd_{Q,3q_J}(y)|^p+3^{p-1}|\BA(3q_J,f)(x)|^p+3^{p-1}|\BA(3q_J,f)(y)|^p.
\end{align}
We estimate the contribution of each summand above separately. The estimation of the second and the third summands is 
similar and we do that first. \\

\noindent\textbf{Step 2. Bounding the local terms.} On $S_J(x)$ we have by (\ref{3.3}) and the quasi-monotonicity of $\rho,$ that
\begin{align}
\label{3.7}
 \int_{q_J}\int_{S_J(x)} \rho(x-y)|\BA(3q_J,f)(x)|^p dy dx & \leq C|3q_J| \rho(2^{-J}\ell)\int_{3q_J} |\BA(3q_J,f)(x)|^pdx\\ \nonumber
 &\leq C\rho(2^{-J}\ell)|q_J|^2m(12q_J).
 \end{align}
and 
\begin{align}
\label{3.8}
 \int_{q_J}\int_{S_J(x)} \rho(x-y)|\BA(3q_J,f)(y)|^p dy dx & \leq C|q_J| \rho(2^{-J}\ell)\int_{3q_J} |\BA(3q_J,f)(y)|^pdy\\ \nonumber
   &\leq C\rho(2^{-J}\ell)|q_J|^2m(12q_J),
    \end{align}
thus summing (\ref{3.7}) and (\ref{3.8}) over $q\in\mathcal I_J,$ the local-error contribution at distance $2^{-J}\ell$ 
is at most $CM_J,$ where
\begin{equation}
\label{3.9}
 M_J:=\sum_{q_J\in\mathcal I_J} \rho(2^{-J}\ell)|q_J|^2m(12q_J).
 \end{equation}
 Now we will prove that 
 \begin{equation}
\label{3.10}
L_1:= \sum_{j=3}^\infty M_j \leq C \BB(4Q).
 \end{equation}
 We can estimate the sum in (\ref{3.10}) by changing the order of the nonnegative summation and utilizing the bounded overlap 
 of the cubes $12q.$ The counting function below will be useful for that purpose along the calculations, thus we define 
 \begin{equation}
\label{3.11}
N_j(x,y):=\sum_{q\in \mathcal{I}_j} \mathds{1}_{12q}(x) \mathds{1}_{12q}(y),
 \end{equation}
 which is bounded due to the bounded overlap property of the enlarged $12q$ cubes:
  $$
 N_j(x,y)\leq C,\qquad j\in\mathbb N.
 $$
 Assume now $k\geq 3$ is the last generation that satisfies $|x-y|\leq 12\sqrt{n}2^{-k}\ell,$ i.e., we have 
 $$
 12\sqrt{n}2^{-k-1}\ell<|x-y|\leq 12\sqrt{n}2^{-k}\ell,
 $$
 thus for those pairs $(x,y)$ both points $x$ and $y$ can't belong to the same $12q_j$ cube with $j>k$, hence we have on one hand 
 $$
 N_j(x,y)=0\qquad\text{for}\qquad j>k,
 $$
 and hence due to the bound $N_j(x,y)\leq C,$ and the inverse doubling inequality (H$\rho$2'), the estimate 
 \begin{align*}
 \sum_{j=3}^\infty N_j(x,y)\rho(2^{-j}\ell)&=\sum_{j=3}^k N_j(x,y)\rho(2^{-j}\ell)\\ 
 &\leq C\sum_{j=3}^k c_2'^{k-j}\rho(2^{-k}\ell)\\
 & \leq \frac{C}{1-c_2'}\rho(2^{-k}\ell).
 \end{align*}
 And on the other hand we have again by (H$\rho$1')-(H$\rho$2') and thanks to $12\sqrt{n}2^{-k-1}\ell<|x-y|\leq 12\sqrt{n}2^{-k}\ell,$
 the bound
 $$
 \rho(2^{-k}\ell)\leq C\rho(x-y).
 $$ 
 Consequently, we can estimate 
 \begin{align}
\label{3.12}
 \sum_{j=3}^\infty M_j& =12^{-2n}\sum_{j=3}^\infty  \int_{4Q}\int_{4Q} N_j(x,y)\rho(2^{-j}\ell)|\BB f(x,y)|^p dx dy\\ \nonumber
 & = 12^{-2n}\int_{4Q}\int_{4Q} \left\{\sum_{j=3}^\infty N_j(x,y)\rho(2^{-j}\ell)\right\}|\BB f(x,y)|^p dx dy\\ \nonumber
&\le C\int_{4Q}\int_{4Q} \rho(x-y)|\BB f(x,y)|^p dx dy\\  \nonumber
 &=C \BB(4Q),
  \end{align}
 which is (\ref{3.10}).\\
 
 \noindent\textbf{Step 3. Bounding the global-to-local connection terms.}  Now we estimate the global-to-local connection contributions coming from (\ref{3.6}), i.e., the summand $|\BGd_{Q,3q}(x)-\BGd_{Q,3q}(y)|^p,$ for which we need to estimate for $x\in q=q_J(x)$ and $y\in S_J(x),$ the integrals
\begin{align*}
 L_2&=\int_q\int_{S_J(x)} \rho(x-y)|\BGd_{Q,3q_J}(x)-\BGd_{Q,3q_J}(y)|^pdxdy.\\
  \end{align*}
To estimate these differences using local information, we telescope along the tripled dyadic ancestors of $q_J(x)$:
\begin{align*}
 & |\BGd_{Q,3q_J}(x)-\BGd_{Q,3q_J}(y)|\leq  |\BGd_{Q,3q_3}(x)-\BGd_{Q,3q_3}(y)|+|\BGd_{3q_3,3q_J}(x)-\BGd_{3q_3,3q_J}(y)|\\
&\leq |\BGd_{Q, 3q_3}(x)-\BGd_{Q, 3q_3}(y)|+\sum_{j=4}^{J} |\BGd_{3q_j,3q_{j-1}}(x)-\BGd_{3q_j,3q_{j-1}}(y)|,
 \end{align*}
where $q_{j-1}$ is the ancestor (parent) of $q_j,$ that is the previous generation dyadic cube such that $q_j\subset q_{j-1}.$ Consequently, we can estimate by Minkowski and (H$\rho$1')-(H$\rho$2'):
\begin{align*}
 L_2(J,q_J):=\int_{q_J}\int_{S_J(x)} \rho(x-y)| & \BGd_{Q,3q_J}(x)-\BGd_{Q,3q_J}(y)|^pdxdy\\
 & \leq C\rho(2^{-J}\ell) \int_{q_J} \int_{S_J(x)}|\BGd_{Q,3q_3}(x)-\BGd_{Q, 3q_3}(y)|^pdxdy \\
 &+C\rho(2^{-J}\ell)\int_{q_J}\int_{S_J(x)}\left(\sum_{j=4}^{J} |\BGd_{3q_j,3q_{j-1}}(x)-\BGd_{3q_j,q3_{j-1}}(y)|\right)^pdxdy\\
 &:=C(I_1+I_2).
  \end{align*}
  For $I_1$ we have by hypothesis (HA1), that
 \begin{align*} 
 I_1(J,q_J)&=\rho(2^{-J}\ell) \int_{q_J} \int_{S_J(x)}|\BGd_{Q,3q_3}(x)-\BGd_{Q, 3q_3}(y)|^pdxdy \\
 & \leq \rho(2^{-J}\ell) \int_{q_J} \int_{3q_J}|\BGd_{Q,3q_3}(x)-\BGd_{Q, 3q_3}(y)|^pdxdy \\
    &\leq C 2^{-Jp}\rho(2^{-J}\ell) \frac{|q_J|^2}{|Q|^2}  \int_{Q} \int_{3q_3}|\BGd_{Q,3q_3}(x)-\BGd_{Q, 3q_3}(y)|^pdxdy \\
    &=C2^{-J(p+2n)}\rho(2^{-J}\ell) \int_{Q} \int_{3q_3}|\BGd_{Q,3q_3}(x)-\BGd_{Q, 3q_3}(y)|^pdxdy.
         \end{align*}  
Thus for a fixed $J\geq 3,$ the summation in $q_J$ gives 
 \begin{align*} 
 \sum_{q_J}I_1(J,q_J) 
 & \leq \sum_{q_J} C 2^{-J(p+2n)}\rho(2^{-J}\ell)\int_{Q} \int_{3q_3}|\BGd_{Q,3q_3}(x)-\BGd_{Q, 3q_3}(y)|^pdxdy\\
 &= C 2^{-J(p+n)}\rho(2^{-J}\ell)\int_{Q} \int_{3q_3}|\BGd_{Q,3q_3}(x)-\BGd_{Q, 3q_3}(y)|^pdxdy.
 \end{align*}  
 Now we need to perform another summation in $J.$ To estimate that sum, we divide the interval $(0,\ell]$ into the dyadic intervals 
 $(2^{-J-1}\ell, 2^{-J}\ell],$ and apply the estimate $\rho(t) \geq c_1c_2 \rho(2^{-J-1}\ell)$ for 
 $t\in (2^{-J-1}\ell, 2^{-J}\ell],$ to arrive at 
  \begin{align*} 
\sum_{J=3}^\infty \sum_{q_J}I_1(J,q_J) 
&\leq \sum_{J=3}^\infty C2^{-J(p+n)}\rho(2^{-J}\ell)\int_{Q} \int_{3q_3}|\BGd_{Q,3q_3}(x)-\BGd_{Q, 3q_3}(y)|^pdxdy\\
&\leq C\cdot \left(\frac{1}{\ell^{n+p}\rho(\ell)}\int_0^{\ell} t^{n+p-1}\rho(t)dt \right)
\int_{Q}\int_{3q_3}\rho(\ell)|\BGd_{Q,3q_3}(x)-\BGd_{Q, 3q_3}(y)|^pdxdy\\
&\leq Cc_3\int_{Q}\int_{3q_3}\rho(\ell)|\BGd_{Q,3q_3}(x)-\BGd_{Q, 3q_3}(y)|^pdxdy,
\end{align*}  
where we invoked (H$\rho$3) to replace the integral of $\rho$ in the last step. Further choosing $Q,$ $P=3q_3,$ and 
$P_1=P2=q_3$ in (HA2), where all four cubes have comparable size, we have the replacement
 \begin{align*} 
 \int_{Q} \int_{3q_3}\rho(\ell)|\BGd_{Q,3q_3}(x)-\BGd_{Q, 3q_3}(y)|^pdxdy 
 \leq C\int_{q_3} \int_{q_3}\rho(\ell)|\BGd_{Q,3q_3}(x)-\BGd_{Q, 3q_3}(y)|^pdxdy, 
  \end{align*} 
  where 
$$
  \BGd_{Q,3q_3}(x)-\BGd_{Q, 3q_3}(y)=\BA(Q,f)(x)-\BA(Q,f)(y)-\BA(3q_3,f)(x)+\BA(3q_3,f)(y),
$$
which can be estimated by Minkowski and then estimating all four integrals over $q_3\times q_3$ separately exactly in 
the same way utilizing (\ref{2.1}). For instance we can calculate
 \begin{align*} 
  \int_{q_3} \int_{q_3}\rho(\ell)|\BA(Q,f)(x)|^pdxdy&=|q_3|\int_{q_3}\rho(\ell)|\BA(Q,f)(x)|^pdx\\
  &\leq |q_3|\int_{Q}\rho(\ell)|\BA(Q,f)(x)|^pdx\\
    &\leq C\int_{4Q}\int_{4Q}\rho(x-y)|Bf(x,y)|^pdxdy.
 \end{align*}  
 due to the inverse doubling and quasi-monotonicity properties of $\rho.$ Consequently, It remains to estimate the sum corresponding to 
\begin{equation}
\label{3.13}
I_2(J,q_J)=\rho(2^{-J}\ell)\int_{q_J}\int_{S_J(x)}\left(\sum_{j=4}^{J} |\BGd_{3q_j,3q_{j-1}}(x)-\BGd_{3q_j,3q_{j-1}}(y)|\right)^pdxdy.
\end{equation}
which is the more complex part. We will apply Minkowski inequality first to get rid of the $p-$th power of the sum under the 
integral. We have 
\begin{align}
\label{3.14}
I_2(J,q_J)&=\rho(2^{-J}\ell)\int_{q_J}\int_{S_J(x)}\left(\sum_{j=4}^{J} |\BGd_{3q_j,3q_{j-1}}(x)-\BGd_{3q_j,3q_{j-1}}(y)|\right)^pdxdy\\ 
\nonumber
&\leq \rho(2^{-J}\ell)\left(\sum_{j=4}^{J}
\left( \int_{q_J}\int_{S_J(x)}|\BGd_{3q_j,3q_{j-1}}(x)-\BGd_{3q_j,3q_{j-1}}(y)|^pdydx\right)^{\frac{1}{p}}\right)^p\\ \nonumber
& \leq \rho(2^{-J}\ell)\left(\sum_{j=4}^{J}
\left( \int_{q_J}\int_{3q_J}|\BGd_{3q_j,3q_{j-1}}(x)-\BGd_{3q_j,3q_{j-1}}(y)|^pdydx\right)^{\frac{1}{p}}\right)^p.
\end{align}
Next we  apply (HA1) and (\ref{2.1}) to estimate the contribution 
\begin{align*}
\int_{q_J}\int_{3q_J}&|\BGd_{3q_j,3q_{j-1}}(x)-\BGd_{3q_j,3q_{j-1}}(y)|^pdydx\\ 
&\leq C2^{(p+2n)(j-J)}\int_{3q_j}\int_{3q_j}|\BGd_{3q_j,3q_{j-1}}(x)-\BGd_{3q_j,3q_{j-1}}(y)|^pdydx\\
&\leq C2^{(p+2n)(j-J)}|q_j|\int_{3q_j}|(\BA(3q_j,f)(x)|^p+|\BA(3q_{j-1},f)(x)|^p)dx\\
&\leq C2^{(p+2n)(j-J)}|q_j|\left(\int_{3q_j}|(\BA(3q_j,f)(x)|^pdx+\int_{3q_{j-1}}|\BA(3q_{j-1},f)(x)|^pdx\right)\\
&\leq C2^{(p+2n)(j-J)}|q_{j-1}|^2m(12q_{j-1})\\
&=C2^{p(j-J)}|q_{J-1}|^2m(12q_{j-1})\\
\end{align*}
Consequently, we get 
\begin{align}
\label{3.15}
I_2(J,q_J)& \leq C\rho(2^{-J}\ell)|q_{J-1}|^2\left(\sum_{j=4}^{J} 
2^{j-J}m(12q_{j-1})^{\frac{1}{p}}\right)^p\\ \nonumber
&\leq C\rho(2^{-J}\ell)|q_J|^2\left(\sum_{j=3}^{J}
2^{j-J}m(12q_j)^{\frac{1}{p}}\right)^p
\end{align}

To simplify further, next we apply H\"older's inequality to the sum $\sum_{j=3}^{J}2^{j-J}m(12q_j)^{\frac{1}{p}}$ with the 
exponents $p$ and $q=\frac{p}{p-1}$ for the case $p>1,$ and for the case $p=1$ this H\"older's inequality application step is not needed. A crucial point is that we need to maintain enough negative powers of $2$ to ensure enough decay and convergence 
after the inequality application. Thus we fix a small constant $\Ge>0$ yet to be chosen, and estimate for $p>1$ as follows:  
\begin{align*}
\sum_{j=3}^{J} 2^{j-J}m(12q_j)^{\frac{1}{p}}
&=\sum_{j=3}^{J} \left(2^{\frac{\Ge(j-J)}{q}}\right)\cdot \left(2^{\frac{\Ge(j-J)}{p}+(1-\Ge)(j-J)}m(12q_j)^{\frac{1}{p}}\right)\\ \nonumber
&\leq \left(\sum_{j=3}^{J} 2^{\Ge(j-J)}\right)^{\frac{1}{q}} \left(\sum_{j=3}^{J} 2^{(j-J)(p+\Ge(1-p))} m(12q_j)\right)^{\frac{1}{p}}\\ \nonumber
&\leq \frac{2^{\frac{\Ge(p-1)}{p}}}{(2^{\Ge}-1)^{\frac{p-1}{p}}} \left(\sum_{j=3}^{J} 2^{(j-J)(p+\Ge(1-p))} m(12q_j)\right)^{\frac{1}{p}}\\
\end{align*}
and clearly the estimate holds true for the case $p=1$ too. Therefore we can simplify (\ref{3.15}) further as 
\begin{equation}
\label{3.16}
I_2(J,q_J) 
\leq CC(\Ge,p)\rho(2^{-J}\ell)|q_J|^2\sum_{j=3}^{J}2^{(j-J)(p+\Ge(1-p))} m(12q_j),
\end{equation}
where for keeping track of the constant we record 
\begin{equation}
\label{3.17}
C(\Ge,p)=\frac{2^{\Ge(p-1)}}{(2^\Ge-1)^{p-1}}.
\end{equation}
We can further wright 
\begin{align*}
\rho(2^{-J}\ell)|q_J|^2&\sum_{j=3}^{J}2^{(j-J)(p+\Ge(1-p))} m(12q_j)\\
&=\rho(2^{-J}\ell)|q_J|^2\sum_{j=3}^{J}2^{(j-J)(p+\Ge(1-p))} \frac{1}{|12q_j|^2}\int_{12q_j}\int_{12q_j}|\BB f(x,y)|^pdxdy\\
&=12^{-2n}\rho(2^{-J}\ell)\sum_{j=3}^{J}2^{(2n+p+\Ge(1-p))(j-J)}\int_{12q_j}\int_{12q_j}|\BB f(x,y)|^pdxdy.\\
\end{align*}
For a fixed $J\geq 3,$ summing over $q_J$ gives from (\ref{3.16}) and from the above estimate 
\begin{align*}
\sum_{q_J}I_2(J,q_J)
& \leq C C(\Ge,p)\int_{4Q}\int_{4Q}\left\{\sum_{j=3}^J 2^{(n+p+\Ge(1-p))(j-J)}N_j(x,y)\rho(2^{-J}\ell)\right\}|\BB f(x,y)|^pdxdy.
\end{align*}
This is because each time climbing a step, a parent cube must be counted exactly $2^n$ times, thus in the $k-th$ step, the parent ancestor cube has to be counted exactly $2^{nk}$ times. Now, another summation in $J$ yields
\begin{align}
\label{3.18}
\sum_{J=3}^\infty & \sum_{q_J}I_2(J,q_J)\\ \nonumber
& \leq C C(\Ge,p)\sum_{J=3}^\infty \int_{4Q}\int_{4Q}\left\{\sum_{j=3}^J 2^{(n+p+\Ge(1-p))(j-J)}N_j(x,y)\rho(2^{-J}\ell)\right\}
|\BB f(x,y)|^pdxdy\\  \nonumber
&=C C(\Ge,p)\int_{4Q}\int_{4Q}\left\{\sum_{J=3}^\infty\sum_{j=3}^J 2^{(n+p+\Ge(1-p))(j-J)}N_j(x,y)\rho(2^{-J}\ell)\right\}
|\BB f(x,y)|^pdxdy.
\end{align}
Consequently, if we assume  as above for a pair $(x,y)\in 4Q\times 4Q$ the bounds 
$$
 12\sqrt{n}2^{-k-1}\ell<|x-y|\leq 12\sqrt{n}2^{-k}\ell,
 $$
 hold for some $k\geq 3,$ then for those pairs $(x,y)$ we have 
 $$
 N_j(x,y)=0\qquad\text{for}\qquad j>k,
 $$
thus utilizing the boundedness of $N_j$ and the inverse doubling property and quasi-monotonicity (H$\rho$1')-(H$\rho$2') 
of $\rho,$ we can estimate setting for convenience $b=2^{n+p+\Ge(1-p)},$ as follows:
\begin{align}
\label{3.19}
\sum_{J=3}^\infty\sum_{j=3}^J b^{j-J}N_j(x,y)\rho(2^{-J}\ell)\leq \sum_{m=0}^\infty C_m \rho(2^{-m}\ell),
 \end{align}
 where 
\begin{equation}
\label{3.20}
C_m\leq 
\begin{cases}
C \sum_{j=0}^{m} b^{j-m}\leq C &  \text{if} \ \ m\leq k \\[2ex]
C \sum_{j=0}^{k} b^{j-m}\leq Cb^{k-m} &  \text{if} \ \ m> k.
\end{cases}
\end{equation}
Consequently, we get from (\ref{3.19}) and (\ref{3.20}) the bound 
\begin{equation}
\label{3.21}
\sum_{J=3}^\infty\sum_{j=3}^J b^{j-J}N_j(x,y)\rho(2^{-J}\ell) \leq C \sum_{m=0}^k \rho(2^{-m}\ell)+
 C \sum_{i=1}^\infty b^{-i}\rho(2^{-k-i}\ell).
 \end{equation}
The first sum can be estimated easily by the inverse doubling property (H$\rho$2') as follows:
\begin{align*}
\sum_{m=0}^k \rho(2^{-m}\ell) & \leq \sum_{m=0}^k c_2'^{k-m}\rho(2^{-k}\ell)\\
& \leq\frac{1}{1-c_2'}\rho(2^{-k}\ell)\\
& \leq\frac{C}{1-c_2'}\rho(x-y),
\end{align*}
while for the second sum we need a more sophisticated argument. Denote 
$$
\varphi(t)=\int_0^t s^{n+p-1}\rho(s)ds,\qquad t>0,
$$
in the moment bound (H$\rho$3). We can calculate for fixed $0<\mu<a$ (where the integrals are convergent) and
 for $\delta>0$ by integrating by parts and owing back to (H$\rho$3), that
\begin{align*}
\int_\mu^a t^{n+p-1-\delta}\rho(t)dt &=\int_\mu^a t^{-\delta}\varphi'(t)dt\\
&\leq a^{-\delta}\varphi(a)+\delta \int_\mu^a t^{-\delta-1}\varphi(t)dt\\
&\leq a^{-\delta}\varphi(a)+\delta c_3\int_\mu^a t^{n+p-\delta-1}\rho(t)dt,\\
\end{align*}
which implies together with $c_3\delta=\frac{1}{2}$ for the specific choice $\delta=\frac{1}{2c_3},$ the estimate
\begin{align*}
\int_\mu^a t^{n+p-1-\delta}\rho(t)dt & \leq 2a^{-\delta}\varphi(a)\\
&\leq 2c_3 a^{n+p-\delta}\rho(a),
\end{align*}
for all $0<\mu\leq 1.$ Therefore sending $\mu$ to zero, we discover the key improved moment bound with $\Gd-$margin:
\begin{align}
\label{3.22}
\int_0^a t^{n+p-1-\delta}\rho(t)dt \leq 2c_3 a^{n+p-\delta}\rho(a).
 \end{align}
 \begin{remark}
 A bootstrapping argument yields the bound 
$$
\int_0^a t^{n+p-1-\delta_k}\rho(t)dt \leq 2^kc_3 a^{n+p-\delta_k}\rho(a),\qquad \text{with} \qquad \delta_k=\frac{1}{c_3}(1-2^{-k}),\ \ k=1,2,\dots.
$$
 However, the estimate for $k=1$ in (\ref{3.22}) is sufficient for our purpose of bounding the second sum in (\ref{3.21}).  
 \end{remark}
We now choose $a=2^{-k}\ell$ in (\ref{3.22}) and divide the interval $(0,2^{-k}\ell]$ into dyadic intervals 
$(2^{-k-i}\ell,2^{-k-i+1}\ell],$ for $i=1,2\dots,$ and utilize (H$\rho$1') and (H$\rho$2') to estimate 
\begin{align*}
\sum_{i=1}^\infty 2^{-(i+k)(n+p-\delta)}\ell^{n+p-\Gd}\rho(2^{-k-i}\ell) & \leq C\int_0^{2^{-k}\ell} t^{n+p-1-\Gd}\rho(t)dt\\ 
& \leq Cc_3 2^{-k(n+p-\Gd)} \ell^{n+p-\Gd}\rho(2^{-k}\ell),
\end{align*}
which yields anther key bound 
\begin{equation}
\label{3.23}
\sum_{i=0}^\infty 2^{-i(n+p-\Gd)}\rho(2^{-k-i}\ell) \leq Cc_3\rho(2^{-k}\ell).
\end{equation}
Now, the choice of $\Ge$ is trivial, namely one should choose
 \begin{equation}
\label{3.24}
 \Ge=\frac{\Gd}{p-1}=\frac{1}{2c_3(p-1)},\qquad p>1.
 \end{equation}
 Note, that for the case $p=1,$ the step with H\"older inequality as well as the steps leading to (\ref{3.22}) are not necessary, and one can proceed to estimating the two modified sums in (\ref{3.21}), and no additional $\Ge(p-1)$ error occurs. Now, we can estimate the second sum thanks to (\ref{3.23}) as follows:
\begin{align*}
 \sum_{i=1}^\infty b^{-i}\rho(2^{-k-i}\ell)&=\sum_{i=1}^\infty 2^{-i(n+p-\Gd)}\rho(2^{-k-i}\ell)\\
 & \leq Cc_3\rho(2^{-k}\ell)\\ 
   & \leq Cc_3 \rho(x-y).
\end{align*}
Finally, inserting this bound into (\ref{3.18}), we discover
\begin{equation}
\label{3.25}
\sum_{J=3}^\infty \sum_{q_J}I_2(J,q_J)\leq  C\int_{4Q}\int_{4Q} \rho(x-y)|\BB f(x,y)|^pdxdy.
\end{equation} 
This completes Step 3. Now, combining Steps 1-3, we arrive at (\ref{2.2}). Note, that the constant we got in (\ref{2.2}) has explicit $c_3$-dependence as 
\begin{align*}
C=Cc_3C(p,\Ge)=Cc_3\frac{2^{\Ge(p-1)}}{(2^\Ge-1)^{p-1}},
\end{align*}
and it is easy to see that it is bounded above by $C(1+c_3^p).$ This completes the proof of Theorem~2.1.

\end{proof}

for uniform domains

\subsection{Proof of Theorem~2.3}
\label{Sub:3.2}

\begin{proof}[Proof of Theorem~2.3]

The strategy of the proof is similar to that of [\ref{bib:Jones},\ref{bib:Pra.Tos.},\ref{bib:Pra.Sak.},\ref{bib:Aco.Dre.Dur.Lop.Oje.}] via a Whitney cube decomposition. In authors [\ref{bib:Aco.Dre.Dur.Lop.Oje.}] extend the fractional Korn's first inequality from smoothened cubes to uniform domains through Whitney decomposition through a gluing technique, due to the an ordering result on Whitney 
cubes on John domains, which establishes the arrangement of the cubes into a tree with controlled tails [\ref{bib:Lopez}]. 
We will keep track of the case $p=1$ through the proof lines, as some of the results we use are formulated for $p>1,$ which 
we need to address in the case $p=1$ and verify their validity whenever necessary. We now consider a standard Whitney decomposition of $\GO$ into a countable family of closed dyadic cubes $\mathcal{F}=\{Q_j\}_{j\in \mathcal{J}}$ satisfying 
the conditions (C1)-(C3) below:
\begin{itemize}
\item[(C1)] $8\mathrm{diam}(Q)\leq \mathrm{dist}(Q,\partial\GO)\leq 40\mathrm{diam}(Q)$ for all $Q\in \mathcal{F}.$ 
\item[(C2)] $\frac{1}{2}\mathrm{diam}(Q_j)\leq \mathrm{diam}(Q_k)\leq 2\mathrm{diam}(Q_j)$ if $Q_j\cap Q_k\neq \emptyset.$
\item[(C3)] $\cup_{J\in \mathcal{J}}Q=\GO.$
\end{itemize}
We refer to [\ref{bib:Stein}] for a version of decomposition with constants $1$ and $4$ in (C1). The constants $8$ and $40$ can be achieved by further dividing every cube into $2^n$ equal dyadic cubes three times, which will preserve (C2)-(C3) and yield the 
current form of (C1). The following lemma on graphs on Whitney cubes, proven by L\'opez-Gar\'cıa in [\ref{bib:Lopez}, Lemma~5.2] is an important ingredient in the proof.\\

\begin{lemma}[{[\ref{bib:Lopez}, Lemma~5.2]}] 
\label{Lem:3.2}
A bounded domain $\GO\subset \mathbb R^n$ is a John domain if and only if given a Whitney decomposition 
$\mathcal{F}=\{Q_j\}_{j\in \mathcal{J}}$ of $\GO,$ there exist a tree structure for the set of indices $ \mathcal{J}$ and a constant 
$K>1$ that satisfy $Q\subseteq KQ_a$ for any pair $(Q_a,Q)$ of cubes, where $Q_a$ is an ancestor of $Q$ through the three structure. In other words, the shadow of $Q_a$ is contained in the expanded set $KQ_a.$ Moreover, the intersection of any adjacent parent-child cubes $Q_p$ and $Q_c$ in the three is always an $n-1$ dimensional face of one of these cubes.
\end{lemma}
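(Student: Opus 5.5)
We prove the two implications separately. The easier one is that the tree structure implies the John property; the harder one constructs the tree from the John property. Throughout we use the fact (Martio--Sarvas [\ref{bib:Mar.Sar.}]) that the John condition is equivalent, up to changing the constant, to the \emph{distance} (carrot) version: there are $x_0\in\GO$ and $c>0$ such that every $x\in\GO$ is joined to $x_0$ by a rectifiable curve $\Gg$ with $\mathrm{dist}(\Gg(t),\partial\GO)\geq c|\Gg(t)-x|$ along $\Gg$.

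\textbf{Tree $\Rightarrow$ John.} Let $Q_0$ be the root and let $x_0$ be its center. For $x\in Q$, list the ancestors $Q=Q^0,Q^1,\dots,Q^m=Q_0$. Define $\Gg$ as the polygonal path from $x$ to the center of $Q^0$, then through the centers of consecutive faces $Q^{i}\cap Q^{i+1}$ (these are $(n-1)$-faces by hypothesis), then to the center of $Q^{i+1}$, and so on up to $x_0$. Every such segment stays inside $Q^i\cup Q^{i+1}$.

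For $y$ on the piece inside $Q^k$, the shadow property gives $x\in Q\subseteq KQ^k$, so $|y-x|\leq K\,\mathrm{diam}(Q^k)$. By (C1), $\mathrm{dist}(y,\partial\GO)\geq 8\,\mathrm{diam}(Q^k)$. Hence $\mathrm{dist}(y,\partial\GO)\geq (8/K)|y-x|$, which is the carrot condition. Rectifiability is automatic because the chain is finite.

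\textbf{John $\Rightarrow$ tree.} Let $G$ be the graph on $\mathcal J$ whose edges join cubes sharing an $(n-1)$-dimensional face. Two Whitney cubes that merely touch are linked by a uniformly bounded number of face-adjacent cubes of comparable size, by (C2). Consequently, graph distance in $G$ is comparable to the Whitney-chain distance, and hence to the quasihyperbolic distance $k_\GO$ up to additive constants.

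Fix a root $Q_0$ containing the John center $x_0$. Build the tree by breadth-first search in $G$: the parent of $Q\neq Q_0$ is a face-neighbour at graph distance one less from $Q_0$, chosen by any fixed tie-breaking rule. This is a genuine tree, parent and child share a face by construction, and the ancestor chain of every $Q$ is a $G$-geodesic to $Q_0$.

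\textbf{Shadow property (main obstacle).} It remains to show that the ancestor chain $Q=Q^0,\dots,Q^m$ satisfies $|c_Q-c_{Q^k}|\leq C\,\mathrm{diam}(Q^k)$ for all $k$, where $c_Q$ denotes the center of $Q$; this gives $Q\subseteq KQ^k$. The approach is as follows.

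First, the chain is a quasihyperbolic quasi-geodesic, since its length in $G$ is comparable to $k_\GO$ of its endpoints.

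Next, in John domains $k_\GO(x,x_0)\leq C\log\bigl(\mathrm{dist}(x_0,\partial\GO)/\mathrm{dist}(x,\partial\GO)\bigr)+C$ (Gehring--Martio). Moreover, quasihyperbolic (quasi-)geodesics in John domains are themselves John curves, by the Gehring--Hag--Martio theorem.

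Applying the carrot condition to the chain at the point $c_{Q^k}$ gives $\mathrm{diam}(Q^k)\simeq\mathrm{dist}(c_{Q^k},\partial\GO)\geq c|c_{Q^k}-c_Q|$, which is what we want.

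The delicate point is transferring the Gehring--Hag--Martio conclusion from true geodesics to discrete BFS chains. I would try to do this directly. Suppose the chain wandered a distance $\gg\mathrm{diam}(Q^k)$ from $Q$ before reaching $Q^k$. Then the John curve from $Q$ would provide a strictly shorter chain to $Q_0$, with roughly logarithmic length in the size ratio, contradicting the minimality of the BFS chain. If this direct argument fails, the fallback is to cite López-García's original construction, which defines parents along the John curves themselves.
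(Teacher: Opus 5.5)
The paper does not prove this lemma; it imports it from L\'opez-Garc\'{\i}a. Your direction ``tree $\Rightarrow$ John'' is fine. You can even avoid the distance-versus-length carrot equivalence: the shadow condition gives $\sum_{i\le k}\ell(Q^i)^n\le K^n\ell(Q^k)^n$, and the argument of (\ref{3.40})--(\ref{3.41}) then gives geometric decay of the sizes below $Q^k$. So your polygonal path has length $\lesssim\ell(Q^k)$ up to $Q^k$.

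The genuine gap is in ``John $\Rightarrow$ tree''. The step you call delicate is not merely unproven, it is false: a breadth-first-search tree in the face-adjacency graph need not have the shadow property in a John domain. Here is a counterexample.
\begin{itemize}
\item Take the unit cube $C\ni x_0$ and attach below its bottom face disjoint, rapidly shrinking pockets.
\item Pocket $i$ is a room (a cube of side $S_i$) separated from $C$ by a wall of thickness $s_i$, pierced by a channel of width $s_i$.
\item The room is also joined to $C$ by a horn whose width grows linearly with slope $c$ from $S_i$ to $W_i$ and then opens into $C$. Take $S_i/s_i=W_i/S_i=e^{i}$.
\end{itemize}
Following the horns gives carrot curves with constant $\simeq c$ uniformly in $i$, so the domain is John. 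But any chain of Whitney cubes through horn $i$ contains $\gtrsim i/c$ cubes. The route through the channel reaches a cube of size $\simeq W_i/c$ in $C$, above both exits, using only $\lesssim i+\log(1/c)$ cubes. After that, both routes continue to $Q_0$ in the same way up to $O(\log(1/c))$ cubes. Hence, for $c$ small (depending on $n$) and $i$ large, every graph geodesic from the central cube of room $i$ to $Q_0$ passes through the channel. That is, it passes through a cube of side $\lesssim s_i$ at distance $\simeq S_i$ from the starting cube, which forces $K\gtrsim e^{i}\to\infty$.

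The same example shows why your supporting claims do not carry the argument. The quasihyperbolic geodesic from the room to $x_0$ also goes through the channel. So the Gehring--Hag--Martio cone-arc property is not available for arbitrary John domains in $\mathbb R^n$; it needs extra structure, e.g.\ quasiconformal equivalence to a uniform domain, as for simply connected planar John domains. It says even less about discrete quasi-geodesics. Your direct shortcut argument fails for the same reason. The John curve from $Q$ only yields a chain of length $\le C\log(\cdot)+C$ with $C\simeq 1/c$, and nothing prevents a graph geodesic that squeezes through a thin neck from being shorter. Note also that graph distance is comparable to $k_\Omega$ only up to multiplicative \emph{and} additive constants. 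What the proof actually needs is a parent map that follows the John curves (or Boman chains) themselves and is consistent along whole ancestor chains, so that the shadow constant does not compound. That is exactly the content of the cited L\'opez-Garc\'{\i}a lemma. Your fallback of quoting it is what the paper does, but the constructive half of your proposal does not stand on its own.
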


If this three structure is not possible to maintain after further division of the original cubes as above, we may assume that the three contains repeating vertexes, that can repeat a bounded number of times with the same parent-child and shadow conditions still 
holding while increasing $K$ if necessary. Let $Q_0$ denote the tree root in the above lemma. Then, we will prove Theorem~2.2
with $Q=2Q_0.$ The following result, proven by Rutkowski in [\ref{bib:Rutkowski.1}, Theorem~1.1] will be crucial in reducing the domains of integration to the Carleson boxes.\\

\begin{theorem} [{[\ref{bib:Rutkowski.1}, Theorem~1.1]}]
\label{Thm:3.3}
Denote the distance function $\Gd(x)=\mathrm{dist}(x,\partial\GO)$ for $x\in\GO$ and assume hypotheses (H$\rho$1'), 
(H$\rho$2'), and (H$\rho$4). Then for every $\Gth\in (0,1),$ one has the equivalence of the seminorms
\begin{align}
\label{3.26}
\int_{\GO}\int_{\GO}\rho(x,y)|f(x)-f(y)|^pdydx \leq C\int_{\GO}\int_{B(x,\Gth\delta(x))}\rho(x,y)|f(x)-f(y)|^pdydx,
\end{align}
for all $f\in L^1_{loc}(\GO).$ The constant $C$ depends only on $n,p,c_1,c_2,c_2',c_4,\Gth,$ and $\GO.$ 
\end{theorem}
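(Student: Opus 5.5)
We prove Theorem~\ref{Thm:3.3} by reducing the full double integral to the near-diagonal region via a Whitney-type chaining argument. Write the full seminorm as $I=I_{near}+I_{far}$, where $I_{near}$ is the integral over the set $\{y\in B(x,\Gth\Gd(x))\}$ and $I_{far}$ is the integral over its complement. Only $I_{far}$ needs to be bounded by the right-hand side of (\ref{3.26}), which we denote by $I_\Gth$.

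\textbf{Step 1: from $\Gth$ to a fixed small $\Gth_0$.} We first reduce to a small fixed $\Gth_0$, say with $\Gth_0\le \Gth$. The map $\Gth\mapsto I_\Gth$ is monotone, so the inequality for $\Gth_0$ implies it for $\Gth$.

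\textbf{Step 2: pointwise chaining.} For a pair with $|x-y|\ge \Gth_0\Gd(x)$, we connect $x$ to $y$ through intermediate averages. We use the Whitney cubes $Q_x\ni x$ and $Q_y\ni y$ from (C1)--(C3), together with the tree of Lemma~\ref{Lem:3.2} linking them through their common ancestor. We bound
\[
|f(x)-f(y)|^p\lesssim |f(x)-f_{B_x}|^p+\Big(\sum_{\text{chain}}|f_{Q_i}-f_{Q_{i+1}}|\Big)^p+|f(y)-f_{B_y}|^p,
\]
where $B_x=B(x,\Gth_0\Gd(x)/2)$. For neighbouring cubes, $|f_{Q_i}-f_{Q_{i+1}}|^p$ is controlled by
\[
\fint_{Q_i^*}\fint_{Q_i^*}|f(z)-f(w)|^p\,dz\,dw,
\]
where $Q_i^*$ is a slight enlargement of $Q_i$. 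Since $|z-w|\lesssim\mathrm{diam}\,Q_i\le\Gth_0\Gd(z)$ for a suitable choice of enlargement, this quantity is bounded by $\rho(\mathrm{diam}\,Q_i)^{-1}|Q_i|^{-2}$ times the local energy of $Q_i^*$ in $I_{\Gth}$. This step uses (H$\rho$1')--(H$\rho$2').

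\textbf{Step 3: integrate against $\rho$ and resum.} The boundary terms are the easy part. For fixed $x$,
\[
\int_{|x-y|\ge\Gth_0\Gd(x)}\rho(x-y)\,dy\lesssim \Gd(x)^n\rho(\Gd(x))
\]
by (H$\rho$4), with Remark~\ref{Rem.2.2} used to localize, and boundedness of $\GO$ for the large scales. Also $|f(x)-f_{B_x}|^p\le\fint_{B_x}|f(x)-f(w)|^pdw$, and $\Gd(x)^n\rho(\Gd(x))\cdot|B_x|^{-1}\lesssim\rho(x-w)$ on $B_x$. So the $x$-boundary term is dominated by $I_\Gth$. The $y$-boundary term is handled symmetrically; this uses $\Gd(y)\lesssim \Gd(x)+|x-y|$ and (H$\rho$4) once more.

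The chain term is the main obstacle. After Hölder with geometric weights, as in Step~3 of the proof of Theorem~2.1, we must count how many pairs $(x,y)$ route through a given Whitney cube $Q_i$, weighted by $\rho(x-y)$. If $Q_i$ lies on the chain, then either $x$ or $y$ lies in the shadow of $Q_i$, which is contained in $KQ_i$, and $|x-y|\gtrsim \mathrm{diam}\,Q_i$. Integrating over the partner variable with (H$\rho$4) therefore gives a weight $\lesssim \mathrm{diam}(Q_i)^n\rho(\mathrm{diam}\,Q_i)$. Multiplying by $|KQ_i|$, this exactly cancels the normalization $\rho(\mathrm{diam}\,Q_i)^{-1}|Q_i|^{-2}$ from Step 2.

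The remaining difficulty is the chain length. This is where the geometric decay in the tree is needed: the sizes of cubes along a John chain decay geometrically in the number of generations, up to bounded repetition, and the Hölder weights $2^{\Ge(\cdot)}$ absorb the logarithmic length. I expect the careful bookkeeping of this overlap count, uniformly in the tree, to be the hardest step. I would first try to follow Rutkowski's argument and the Whitney-tree counting of Acosta et al. For $p=1$ the Hölder step is skipped.
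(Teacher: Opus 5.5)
\textbf{There is a genuine gap in your chain term.} The paper does not prove this statement itself. It imports it from Rutkowski's Theorem~1.1 and only checks that (H$\rho$1'), (H$\rho$2'), (H$\rho$4) match that paper's hypotheses, so your proposal has to stand on its own. Your Step~1 and the two boundary terms in Step~3 are fine.

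The central step fails. You join $Q_x$ to $Q_y$ through their common ancestor in the tree of Lemma~\ref{Lem:3.2} and then assert that every cube $Q_i$ on this path satisfies $\mathrm{diam}\,Q_i\lesssim|x-y|$. This is false. The tree is fixed once and for all and is not adapted to the pair. Two adjacent Whitney cubes of side $t$ lying in different subtrees of the root can only be joined through $Q_0$, whose diameter is comparable to $\mathrm{diam}\,\Omega$, while $|x-y|\sim t$. Such pairs occur at arbitrarily small scales, e.g.\ near boundary points where the closures of two subtrees meet. For these pairs the ratio $\rho(x-y)/\rho(\mathrm{diam}\,Q_i)$ is unbounded by (H$\rho$2'), so the normalization $\rho(\mathrm{diam}\,Q_i)^{-1}|Q_i|^{-2}$ is no longer cancelled. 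The shadow property gives no control on the measure of these branch-crossing pairs.

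No bookkeeping can repair this, because your argument uses only the John structure (Lemma~\ref{Lem:3.2} characterizes John domains), and (\ref{3.26}) is false on John domains. Take:
the slit disc $\Omega=B(0,1)\setminus([0,1)\times\{0\})\subset\mathbb R^2$;
the kernel $\rho(z)=|z|^{-2-sp}$ with $1\le sp<2$, which satisfies (H$\rho$1'), (H$\rho$2'), (H$\rho$4);
and $f$ the polar angle with values in $(0,2\pi)$.
Since $|\nabla f(x)|=|x|^{-1}$ and $\delta(x)\le|x|$, the right-hand side is at most a constant times $\int_{B(0,1)}|x|^{-p}\delta(x)^{p(1-s)}dx\le\int_{B(0,1)}|x|^{-sp}dx<\infty$. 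The left-hand side is infinite: $|f(x)-f(y)|\ge\pi$ for $x,y$ near $[1/4,1/2]\times\{0\}$ on opposite sides of the slit, and $|x-y|^{-2-sp}$ is not integrable across a line when $sp\ge1$.

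\textbf{What is missing is where uniformity enters.} For $|x-y|\ge\theta_0\delta(x)$, chain along a curve $\gamma$ from $x$ to $y$ with both $\ell(\gamma)\le C|x-y|$ and $\min(\ell(x,\gamma(t)),\ell(y,\gamma(t)))\le C\delta(\gamma(t))$, using the Whitney cubes that $\gamma$ meets, in order (Jones-type admissible chains). The length condition is part of the standard definition of a uniform domain but is absent from the definition in Section~1; without it the slit disc qualifies. Every cube $P$ in such a chain satisfies $\ell_P\lesssim\delta(x)+\ell(\gamma)\lesssim|x-y|$ and $x\in CP$ or $y\in CP$. Only boundedly many cubes of each dyadic size occur. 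With this chain your Step~3 counting works: fix $P$, integrate $x$ over $CP$ and the partner over $|x-y|\gtrsim\ell_P$ using (H$\rho$4).

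Two smaller repairs are still needed.
\begin{itemize}
\item The H\"older weights $\lambda_P=(\ell_P/|x-y|)^{\epsilon}$ leave a factor $(|x-y|/\ell_P)^{\epsilon(p-1)}$ in the partner integral. So you need (H$\rho$4) with a margin, $\int_a^1t^{n-1+\eta}\rho(t)\,dt\le 2c_4a^{n+\eta}\rho(a)$ for $\eta=1/(2c_4)$. This follows from (H$\rho$4) by the same integration by parts that gives \eqref{3.22}; then take $\epsilon(p-1)\le\eta$.
\item In Step~2, with the constants of (C1), the inclusion $Q_i^*\times Q_i^*\subset\{|z-w|<\theta_0\delta(z)\}$ holds only for $\theta_0$ of order one. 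After shrinking $\theta_0$ you must refine the Whitney decomposition until $\mathrm{dist}(Q,\partial\Omega)\gtrsim\theta_0^{-1}\mathrm{diam}\,Q$.
\end{itemize}
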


\vspace{0.1cm}

\begin{remark}
The equivalence in (\ref{3.26}) is proven in [\ref{bib:Rutkowski.1}, Theorem~1.1] for the kernel 
$\rho(x,y)=|x-y|^n\phi(|x-y|)^{-p},$ where $\phi$ fulfills hypotheses (A1)-(A3) formulated in [\ref{bib:Rutkowski.2}, Theorem~1.1]. 
A careful inspection reveals, that hypotheses (A1)-(A3) will be met under our (H$\rho$1')-(H$\rho$2') if we assume the 
additional moment bound (H$\rho$4) in (\ref{2.3}). Also, the theorem is proven for $p>1,$ where $p>1$ is utilized to apply a H\"older inequality in the proof, but it will go through for $p=1$ without the application of the H\"older inequality like in our analysis between equations (\ref{3.15}) and (\ref{3.16}). The constant $C$ in (\ref{3.26}) is independent on $c_3$ in (H$\rho$3). 
The equivalence was also proven for the Peridynamics kernel in [\ref{bib:Pra.Sak.}, Theorem~1.6] for $1<p<\infty,$ and in 
[\ref{bib:Seeger}, Corollary 1] for $1\leq p<\infty.$ 
\end{remark}

For any Whitney cube $Q\in \mathcal{F}$ and $x\in Q,$ we have by (C1) that $\delta(x)\leq 41\mathrm{diam}(Q),$ thus, if we choose 
$\Gth=\frac{1}{83\sqrt{n}},$ we obtain $\Gth\delta(x)<\frac{\ell_Q}{2},$ hence we get the Carleson box inclusion $B(x,\Gth\delta(x))\subseteq 2Q.$ Consequently, we can calculate

\begin{align}
\label{3.27}
\int_{\GO}\int_{B(x,\Gth\delta(x))}\rho(x,y)&|\BA(2Q_0,f)(x)-\BA(2Q_0,f)(y)|^pdydx\\  \nonumber
 &=\sum_{j=0}^\infty \int_{Q_j}\int_{B(x,\Gth\delta(x))}\rho(x,y)|\BA(2Q_0,f)(x)-\BA(2Q_0,f)(y)|^pdydx\\   \nonumber
 &\leq \sum_{j=0}^\infty \int_{2Q_j}\int_{2Q_j}\rho(x,y)|\BA(2Q_0,f)(x)-\BA(2Q_0,f)(y)|^pdydx.
\end{align}
Let for $j\in\mathbb N,$ the chain of cubes $Q_j=P_k\to P_{k-1}\to\dots\to P_0=Q_0$ be the ancestor chain leading back to 
the root cube $Q_0$ in the tree in Lemma~3.2. Then for $(x,y)\in (2Q_j)^2=(2P_k)^2,$ we write 
 
\begin{align*}
|\BA(2Q_0,f)&(x)-\BA(2Q_0,f)(y)| \\
&\leq |\BA(2P_k,f)(x)-\BA(2P_k,f)(y)|+| \BGd(2P_k,2P_0)(x)-\BGd(2P_k,2P_0)(y)|\\ 
&\leq |\BA(2P_k,f)(x)-\BA(2P_k,f)(y)|+\sum_{m=0}^{k-1}| \BGd(2P_{m+1},2P_m)(x)-\BGd(2P_{m+1},2P_m)(y)|,
\end{align*}
thus for any fixed $j\in\mathbb N,$ we get by Minkowski
\begin{align}
\label{3.28}
\int_{2Q_j}\int_{2Q_j}&\rho(x,y)|\BA(2Q_0,f)(x)-\BA(2Q_0,f)(y)|^pdydx\\ \nonumber
&\leq  2^{p-1}\int_{2Q_j}\int_{2Q_j}\rho(x,y)|\BA(2Q_j,f)(x)-\BA(2Q_j,f)(y)|^pdydx\\  \nonumber
&+2^{p-1}\int_{2Q_j}\int_{2Q_j}\rho(x,y)\left(\sum_{m=0}^{k-1}| \BGd(2P_{m+1},2P_m)(x)-\BGd(2P_{m+1},2P_m)(y)|\right)^pdydx
\end{align}
For the first summand we have by (\ref{2.4}) the bound
\begin{align}
\label{3.29}
\int_{2Q_j}\int_{2Q_j}\rho(x,y)|\BA(2Q_j,f)(x)-\BA(2Q_j,f)(y)|^pdydx\leq C \int_{8Q_j}\int_{8Q_j}\rho(x,y)|\BB f(x,y)|^pdydx,
\end{align}
where the inclusion $8Q_j\subset \GO$ follows from (C1). We can estimate the second summand by Minkowski as follows:
\begin{align}
\label{3.30}
\int_{2Q_j}\int_{2Q_j}&\rho(x,y)\left(\sum_{m=0}^{k-1}| \BGd(2P_{m+1},2P_m)(x)-\BGd(2P_{m+1},2P_m)(y)|\right)^pdydx \\ \nonumber
&\leq \left(\sum_{m=0}^{k-1} \left(\l\int_{2Q_j}\int_{2Q_j} \rho(x,y)|\BGd(2P_{m+1},2P_m)(x)-\BGd(2P_{m+1},2P_m)(y)|^pdydx\right)
^{\frac{1}{p}}\right)^p
 \end{align}
Now, in order to estimate the right-hand side of (\ref{3.30}), we need to transfer the integral from $2Q_j\times 2Q_j$ over 
 $2P_{m+1}\times2P_m$ like in the proof of Theorem~2.1. However, we can't directly utilize (HA1) due to the weight 
 $\rho,$ hence an analogous weighted average Lipschitz estimates are needed, which we derive next. To that end, 
 assume for a moment $P,Q,S\subset\GO$ are cubes and denote for the sake of notational simplicity 
 $g(x)=\BGd(P,Q)(x)$ and $g_0=\fint_S g(z)dz.$ Then we can write for every $x\in S:$
 $$g(x)-g_0=\fint_S (g(x)-g(y))dy,$$
 thus we have by H\"older's inequality
  $$|g(x)-g_0|^p\leq \fint_S |g(x)-g(y)|^pdy,$$
  and integrating one more time in $x$ we get 
 \begin{equation}
\label{3.31}
\fint_S |g(x)-g_0|^p\leq \fint_S\fint_S |g(x)-g(y)|^pdxdy.
 \end{equation}
If we now combine (HA1) together with (\ref{3.31}) with $P_1=P_2=S,$ we discover 
 \begin{equation}
\label{3.32}
 \frac{1}{(\mathrm{diam} S)^p}\fint_S |g(x)-g_0|^p
 \leq \frac{C_1(\BA)}{(\mathrm{diam} (P\cup Q))^p} \fint_P\fint_Q |g(x)-g(y)|^pdxdy.
 \end{equation}
 Since (\ref{3.32}) is true for every cube $S\subset\GO,$ the Campanato Lipschitz characterization yields 
 \begin{equation}
\label{3.33}
\left(\mathrm{Lip}_S (g)\right)^p\leq \frac{C(n,p)C_1(\BA)}{(\mathrm{diam} (P\cup Q))^p} \fint_P\fint_Q |g(x)-g(y)|^pdxdy.
 \end{equation} 
We can also obtain the reverse inequality from (HA2) with a constant depending only on $C_2(\BA)$ and $n.$ 
Consequently, using the average bi-Lipschitz property of $g,$ it is straightforward to obtain the weighted version of the 
average bi-Lipschitz equivalence 
\begin{equation}
\label{3.34}
 \fint_{P_1}\fint_{P_2}\rho(x,y) |g(x)-g(y)|^pdxdy \simeq \frac{\phi(R_1)}{\phi(R)} \fint_P\fint_Q \rho(x,y)  |g(x)-g(y)|^pdxdy,
\end{equation} 
 for $P\cap Q\neq \emptyset$ and $P_1\cap P_2\neq \emptyset,$ where $R_1=\mathrm{diam} P_1\cup P_2,$ $R=\mathrm{diam} P\cup Q,$ and the function $\phi$ is defined as follows: 
 \begin{equation}
\label{3.35}
\phi(r)=r^{-n}\int_0^r t^{n+p-1}\rho(t)dt.
\end{equation} 
Note, that (H$\rho3$) gives on one hand $\phi(r)\leq c_3r^p\rho(r)$ and on the other hand, utilizing 
(H$\rho1$) and (H$\rho2$), we can estimate 
 \begin{align*}
\phi(r)&=r^{-n} \int_0^r t^{n+p-1}\rho(t)dt\\ 
&\geq \frac{1}{c_1}r^{-n} \left(\frac{r}{2}\right)^{n+p}\rho(r)\\
&=Cr^p\rho(r).
\end{align*} 
This gives the simplified weighted equivalence 
\begin{align}
\label{3.36}
 \fint_{P_1}\fint_{P_2} \rho(x,y)|&\BGd(P,Q)(x)-\BGd(P,Q)(y)|^pdxdy \\ \nonumber 
  & \simeq \  \frac{R_1^p\rho(R_1)}{R^p\rho(R)} \fint_P\fint_Q  \rho(x,y)|\BGd(P,Q)(x)-\BGd(P,Q)(y)|^pdxdy,
\end{align} 
with a constant that is between $Cc_3^{-1}$ and $Cc_3.$ Next we choose a cube $S_m\subseteq 2P_{m+1}\cap 2P_m,$ that has a side length comparable to that of $P_m$ and estimate utilizing (\ref{3.36}), (C1)-(C2), and (H$\rho1$)-(H$\rho2$):
\begin{align*}
&\sum_{m=0}^{k-1}\left(\l\int_{2Q_j}\int_{2Q_j} \rho(x,y)|\BGd(2P_{m+1},2P_m)(x)-\BGd(2P_{m+1},2P_m)(y)|^pdydx\right)
^{\frac{1}{p}}\\ 
&\simeq \
\sum_{m=0}^{k-1} 
\left(\frac{\ell_k^{2n+p}\rho(\ell_k)}{\ell_m^{2n+p}\rho(\ell_m)}
\l\int_{S_m}\int_{S_m} \rho(x,y)|\BGd(2P_{m+1},2P_m)(x)-\BGd(2P_{m+1},2P_m)(y)|^p\right)
^{\frac{1}{p}}.
\end{align*} 
Next we can estimate by Minkowski and by (\ref{2.4}) for each $m$ as 
\begin{align*}
 \int_{S_m} & \int_{S_m} \rho(x,y)|\BGd(2P_{m+1},2P_m)(x)-\BGd(2P_{m+1},2P_m)(y)|^pdydx\ \\
&\leq 2^{p-1}  \int_{S_m}\int_{S_m} \rho(x,y)|\BA(2P_{m+1},f)(x)-\BA(2P_{m+1},f)(y)|^pdydx\\\ 
&+2^{p-1}  \int_{S_m}\int_{S_m} \rho(x,y)|\BA(2P_m,f)(x)-\BA(2P_{m},f)(y)|^pdydx\ \\
&\leq 2^{p-1}  \int_{2P_{m+1}}\int_{2P_{m+1}} \rho(x,y)|\BA(2P_{m+1},f)(x)-\BA(2P_{m+1},f)(y)|^pdydx\\\ 
&+2^{p-1}  \int_{2P_m}\int_{2P_m} \rho(x,y)|\BA(2P_m,f)(x)-\BA(2P_{m},f)(y)|^pdydx\ \\
&\leq C(\BB(8P_m)+\BB(8P_{m+1})).
 \end{align*} 
Consequently, we get from (\ref{3.30}) the estimate 
\begin{align}
\label{3.37}
I_j&=\int_{2Q_j}\int_{2Q_j}\rho(x,y)\left(\sum_{m=0}^{k-1} |\BGd(2P_{m+1},2P_m)(x)-\BGd(2P_{m+1},2P_m)(y)|\right)^pdydx \\ \nonumber
&\leq C \left(\sum_{m=0}^{k-1} \left(\frac{\ell_k^{2n+p}\rho(\ell_k)}{\ell_m^{2n+p}\rho(\ell_m)}
[\BB(8P_m)+\BB(8P_{m+1})]\right)^{\frac{1}{p}}\right)^p\\ \nonumber
&=C\ell_k^{2n+p}\rho(\ell_k)\left(\sum_{m=0}^{k-1} \left(\frac{\BB(8P_m)+\BB(8P_{m+1})}{\ell_m^{2n+p}\rho(\ell_m)}
\right)^{\frac{1}{p}}\right)^p\\ \nonumber
\end{align} 
Now we will apply H\"older's inequality in (\ref{3.37}) with the correct weights and with the exponents $q=\frac{p}{p-1}$ and $p,$ 
where in the case $p=1$ no application of the inequality is needed. To that end, let $\Gl_m(j)>0$ for $m=0,\dots,k-1$
yet to be chosen, so that $\sum_{m=0}^{k-1} \Gl_m(j)\leq C$ uniformly in $j.$ Then we can write 
\begin{align*}
\sum_{m=0}^{k-1} \left(\frac{\BB(8P_m)+\BB(8P_{m+1})}{\ell_m^{2n+p}\rho(\ell_m)}\right)^{\frac{1}{p}}
&=\sum_{m=0}^{k-1}\Gl_m(j)^{\frac{1}{q}}\cdot  \left(\frac{\BB(8P_m)+\BB(8P_{m+1})}{\Gl_m(j)^{\frac{p}{q}}\ell_m^{2n+p}\rho(\ell_m)}\right)^{\frac{1}{p}}\\
&\leq \left(\sum_{m=0}^{k-1} \Gl_m(j)\right)^{^{\frac{1}{q}}}\cdot 
\left(\sum_{m=0}^{k-1} \frac{\BB(8P_m)+\BB(8P_{m+1})}{\Gl_m(j)^{\frac{p}{q}}\ell_m^{2n+p}\rho(\ell_m)}\right)^{\frac{1}{p}},
\end{align*}
hence we obtain from (\ref{3.37}) the bound 
\begin{equation}
\label{3.38}
I_j\leq C\ell_k^{2n+p}\rho(\ell_k)\sum_{m=0}^{k-1} \frac{\BB(8P_m)+\BB(8P_{m+1})}{\Gl_m(j)^{p-1}\ell_m^{2n+p}\rho(\ell_m)}.\end{equation} 
Now we fix $\Ge>0$ like we did in the steps between (\ref{3.15}) and (\ref{3.16}) and for a fixed $j$ choose 
$\Gl_m(j)=\frac{\ell_k^\Ge}{\ell_m^\Ge},$ where we need to verify the bound 
\begin{equation}
\label{3.39}
\sum_{m=0}^{k}\frac{\ell_k^\Ge}{\ell_m^\Ge}   \leq C.
\end{equation}
We have by the Whitney cube structure in Lemma~3.2, the shadow volume bound
 \begin{equation}
\label{3.40}
L_m:=\sum_{i=m}^{k} \ell_i^n\leq K^n \ell_m^n,\qquad m=0,1,\dots,k.
\end{equation}
Thus we can calculate
\begin{align*}
L_m & \leq K^n \ell_m^n\\
&= K^n (L_m-L_{m+1}),
\end{align*}
which yields the geometric decay
 \begin{equation}
\label{3.41}
\frac{L_k}{L_m} \leq  \eta^{k-m} ,\qquad m=0,1,\dots,k,\qquad\text{with}\qquad \eta =\frac{K^n-1}{K^n}\in (0,1). 
\end{equation}
Now, we can estimate the sum in (\ref{3.39}) owing to (\ref{3.40}) and (\ref{3.41}) as follows
\begin{align*}
\sum_{m=0}^{k}\frac{\ell_k^\Ge}{\ell_m^\Ge} &\leq  K^{\Ge} \ell_k^\Ge \sum_{m=0}^{k}\frac{1}{L_m^\frac{\Ge}{n}}\\
&= K^{\Ge} \sum_{m=0}^{k}\frac{L_k^\frac{\Ge}{n}}{L_m^\frac{\Ge}{n}}\\
&\leq K^{\Ge} \frac{1}{1-\eta^{\frac{\Ge}{n}}},
\end{align*}
which confirms (\ref{3.39}). It remains to bound the summation over $j$ of $I_j,$ i.e., the right-hand side of
\begin{equation}
\label{3.42}
\sum_{j=1}^\infty I_j\leq C\sum_{j=1}^\infty \ell_k^{2n+p+\Ge(1-p)}\rho(\ell_k)\sum_{m=0}^{k-1} \frac{\BB(8P_m)+\BB(8P_{m+1})}{\ell_m^{2n+p+\Ge(1-p)}\rho(\ell_m)}.
\end{equation} 
Note, that each cube $8P_m$ and the corresponding summand $\BB(8P_m)$ in (\ref{3.42}) occurs through a chaining 
originating from a descendant cube. Also, the quantities $\ell_m^{2n+p+\Ge(1-p)}\rho(\ell_m)$ and 
$\ell_{m+1}^{2n+p+\Ge(1-p)}\rho(\ell_{m+1})$ are comparable due to (C2) and the inverse doublimg property (H$\rho$2).
Consequently, we can estimate 
\begin{align}
\label{3.43}
\sum_{j=1}^\infty I_j & \leq C\sum_{j=1}^\infty \ell_k^{2n+p+\Ge(1-p)}\rho(\ell_k)\sum_{m=0}^{k-1} \frac{\BB(8P_m)+\BB(8P_{m+1})}{\ell_m^{2n+p+\Ge(1-p)}\rho(\ell_m)}\\ \nonumber
&\leq C \sum_{Q\in \mathcal{F}} \BB(8Q)\sum_{P\in \mathcal{F}, P\leq Q}  
\frac{\ell_P^{2n+p+\Ge(1-p)}\rho(\ell_P)}{\ell_Q^{2n+p+\Ge(1-p)}\rho(\ell_Q)},
\end{align}
where the symbol $P\leq Q$ means $P$ is a descendant of $Q$ through the tree in Lemma~3.2. In the last step, 
following the lines (\ref{3.21}) (where we have an almost identical sum to estimate), we choose $\Ge=\frac{1}{2c_3(p-1)},$
that gives $\Ge(p-1)=\Gd$ as in (\ref{3.24}). It remains to estimate the shadow sum
\begin{align*}
\sum_{P\in \mathcal{F}, P\leq Q} \ell_P^{2n+p-\Gd}\rho(\ell_P).
\end{align*}
Note, that from the shadow inclusion 
$$
sh(Q)=\cup_{P\in \mathcal{F}, P\leq Q}P\subseteq KQ \subseteq B(x_Q,\sqrt{n}K\ell_Q),
$$ 
we can further estimate 
\begin{align}
\label{3.44}
\int_{Sh(Q)}\int_{Sh(Q)}|x-y|^{p-\Gd}\rho(x,y)dxdy\leq \int_{B(x_Q,\sqrt{n}K\ell_Q)}\int_{B(x_Q,\sqrt{n}K\ell_Q)}|x-y|^{p-\Gd}\rho(x,y)dxdy,
\end{align}
where for a cube $S$, $x_S$ is the center of $S.$ Next, we have by the quasi-monotonicity of $\rho,$ that 
\begin{align*}
\int_{Sh(Q)}\int_{Sh(Q)}|x-y|^{p-\Gd}\rho(x,y)dxdy 
&\geq \sum_{P\in \mathcal{F}, P\leq Q}\int_{P}\int_{P}|x-y|^{p-\Gd}\rho(x,y)dxdy\\
&\geq  \sum_{P\in \mathcal{F}, P\leq Q}
\int_{B\left(x_P,\frac{\ell_P}{2}\right)} \int_{B\left(x_P,\frac{\ell_P}{2}\right)} |x-y|^{p-\Gd}\rho(x,y)dxdy\\
&\geq  \sum_{P\in \mathcal{F}, P\leq Q}
\rho(\ell_P)\int_{B\left(x_P,\frac{\ell_P}{2}\right)} \int_{B\left(x_P,\frac{\ell_P}{2}\right)} |x-y|^{p-\Gd}dxdy\\
&=C(n,p)\sum_{P\in \mathcal{F}, P\leq Q}\ell_P^{2n+p-\Gd}\rho(\ell_P),
\end{align*} 
for some constant $C(n,p)>0.$ We estimate the right-hand side of (\ref{3.44}) accordingly utilizing the radial property of 
$\rho$ and passing to polar coordinates and invoking (H$\rho$1')-(H$\rho$2') and the improved $\Gd-$margin moment bound (\ref{3.22}):
\begin{align*}
\int_{B(x_Q,\sqrt{n}K\ell_Q)}\int_{B(x_Q,\sqrt{n}K\ell_Q)} |x-y|^{p-\Gd}\rho(x,y)dxdy
&\leq \int_{B(x_Q,\sqrt{n}K\ell_Q)} \int_{B(0,2\sqrt{n}K\ell_Q)}|x|^{p-\Gd}\rho(x)dxdy\\
&=C(n)K^n\ell_Q^n \int_{B(0,2\sqrt{n}K\ell_Q)}|x|^{p-\Gd}\rho(x)dx\\
&=C(n)K^n\ell_Q^{n} \int_0^{2\sqrt{n}K\ell_Q} t^{n+p-1-\Gd}\rho(t)dt\\
&\leq C(n)K^nc_3\ell_Q^{2n+p-\Gd}\rho(2\sqrt{n}K \ell_Q)\\
&\leq C(n)K^nc_3\ell_Q^{2n+p-\Gd}\rho(\ell_Q).
\end{align*}
Finally, we can put together the estimates for both sides of (\ref{3.44}) to discover the desired bound 
\begin{align*}
\sum_{P\in \mathcal{F}, P\leq Q} \ell_P^{2n+p-\Gd}\rho(\ell_P)\leq  Cc_3\ell_Q^{2n+p-\Gd}\rho(\ell_Q),
\end{align*}
which together with (\ref{3.43}) gives 
\begin{align}
\label{3.45}
\sum_{j=1}^\infty I_j \leq Cc_3 \sum_{Q\in \mathcal{F}} \BB(8Q). 
\end{align}
It remains to employ the bounded overlap property of the cubes $8Q,$ $Q\in \mathcal{F},$ which is a consequence of 
(C1)-(C2). The proof of Theorem~2.2 is now complete.

\end{proof}


\section{Applications to Peridynamics}
\setcounter{equation}{0}
\label{Sec:4}

\noindent \textbf{The FKI.} In this case, the operators $\BA$ and $\BB$ are defined as follows:
\begin{equation}
\label{4.1}
\BA(Q,f)(x)=f(x)-\Ba_Q(x),
\end{equation}
and 
\begin{equation}
\label{4.2}
\BB(\GO,f)(x,y)=\frac{(f(x)-f(y))\cdot(x-y)}{|x-y|},
\end{equation}
where $\Ba_Q=\BT x+b\in\mathfrak{se}(n) $ is the minimizer in (\ref{1.5}) on the cube $Q,$ that exists due to the fact that 
the space $\mathfrak{se}(n)$ is finite dimensional. Note, that indeed $\BA$ and $\BB$ obviously satisfy (\ref{1.2})-(\ref{1.3}) and 
$\BA$ fulfills hypotheses (HA1) and (HA2) due to the fact that $\BA(Q,f)(x)-\BA(P,f)(x)$ is affine for any two cubes $P$ and $Q.$\\

\noindent \textbf{For the FGRI.} In this case, the operators $\BA$ and $\BB$ are defined in a similar way:
\begin{equation}
\label{4.3}
\BA(Q,f)(x)=f(x)-\Br_Q(x),
\end{equation}
and 
\begin{equation}
\label{4.4}
\BB(\GO,f)(x,y)=|f(x)-f(y)|-|x-y|,
\end{equation}
where $\Br_Q=\BR x+b$ is the minimizer in (\ref{1.10}) on the cube $Q,$ that exists due to the fact that  $\mathfrak{so}(n)$ is finite dimensional. It is straightforward to check that both $\BA$ and $\BB$ fulfill the conditions (\ref{1.2}), (\ref{1.3}) and hypotheses in 
(HA1) and (HA2) for the exact same reason as in the case of FKI.\\

The reduced inequality (\ref{2.1}) is proven in the companion paper [\ref{bib:Har.Mik.2}] for both of the above 
pairs of operators, thus yielding Theorem~2.3 for both FKI and FGRE. \\

\noindent \textbf{The constant in Paridynamics.} Assume now the kernel $\rho$ is the Peridynamics kernel 

$$\rho(x,y)=|x-y|^{-n-ps},\qquad 1\leq p<\infty,\ \ s\in (0,1).$$

Then it is easily seen that the constants $c_1,c_2,c_2',c_3$ and $c_4$ can be chosen as
\begin{equation}
\label{4.5}
c_1=c_4=1,\quad c_2=c_2'=2^{-n-ps},\quad  c_3=\frac{1}{p(1-s)}.
\end{equation}
Also, the function $\phi$ in (\ref{3.35}) can be calculated explicitly and no $c_3-$dependence will occur in (\ref{3.36}) in this case. Also, no $c_3-$dependence will occur in the calculations between equations (\ref{3.44}) and (\ref{3.45}) as it did for the explicit kernel case. Consequently, we will get a constant in both of FKI and FGRE that has the form in (\ref{2.2}), i.e., 
$$
C=c(1+c_3)^p\sim c((1+(1-s)^{-p}),
$$
i.e., it has the blow up rate $(1-s)^{-p}$ as $s\to 1-.$


\section*{Acknowledgements}
This material is partially supported by the National Science Foundation under Grants No. DMS-2206239. 
We used ChatGPT-6 Astra for reference search and minimal editing to improve the presentation of the manuscript. 
The counting function $N_j$ in (\ref{3.11}) and the short bootstrapping proof for (\ref{3.22}) were suggested by the
 AI to shorten the Tonelli change of summation argument, rather than counting combinatorially, and as an alternative very
short proof of (\ref{3.22}). All of the other ideas, formulations, and proofs are generated by us and the manusctipt is fully 
written by us.

\end{document}